\documentclass[a4paper,10pt,twoside]{article}
\usepackage{pst-all}
\usepackage{graphicx}
\usepackage{fancyhdr}
\usepackage{amsmath}
\usepackage{epsfig} 
\usepackage{psfrag} 
\usepackage{amsmath,amsfonts,amssymb,amsthm} 
\usepackage{bbm} 

\setlength{\oddsidemargin}{2cm}
\setlength{\evensidemargin}{2cm}


\newtheorem{Theorem}{Theorem}[section] 
\newtheorem{Corollary}[Theorem]{Corollary} 
\newtheorem{Lemma}[Theorem]{Lemma}

\theoremstyle{definition} 

%


\newcommand{\CHS}{\mbox{CHS}}

\newcommand{\FS}{\mbox{FS}}
\newcommand{\MAT}{P_{\mathcal{I}}^c(E)}

\newcommand{\rank}{r_{\mathcal{I}}}

\pagestyle{fancy}
\fancyhead{}
\fancyhead[LE,RO]{\thepage}                        

\fancyhead[CO]{On the cardinality constrained matroid polytope} 
\fancyhead[CE]{Jean Fran\c{c}ois Maurras and R\"udiger Stephan}


\title{On the cardinality constrained matroid polytope}
\author{Jean Fran\c{c}ois Maurras\thanks{Laboratoire d'Informatique Fondamentale, UMR 6166, Universit\'e de la Mediterran\'ee, Facult\'e des sciences de Luminy, 163 Avenue de Luminy, 13288 Marseille, France, e-mail : jean-francois.maurras@lif.univ-mrs.fr}  and R\"udiger Stephan\thanks{Institut f\"ur Mathematik, Technische Universit\"at Berlin, Stra{\ss}e des 17. Juni 136, 10623 Berlin, e-mail : stephan@math.tu-berlin.de}}
\date{}
\begin{document}

\maketitle

\begin{abstract}
Given a combinatorial optimization problem $\Pi$ and an increasing finite sequence $c$ of natural numbers, we obtain a cardinality constrained version $\Pi_c$ of $\Pi$ by permitting only those feasible solutions of $\Pi$ whose cardinalities are members of $c$.
We are interested in polyhedra associated with those problems, in particular in inequalities that cut off solutions of forbidden cardinality. Maurras~\cite{Maurras77} and Camion and Maurras~\cite{CM82} introduced a family of inequalities, that we call {\em forbidden set inequalities}, which can be used to cut off those solutions. However, these inequalities are in general not facet defining for the polyhedron associated with $\Pi_c$.
In \cite{KS} it was shown how one can combine integer
characterizations for cycle and path polytopes and a modified form of forbidden set inequalities to give facet defining integer
representations for the cardinality restricted versions of these
polytopes. Motivated by this work, we apply the same approach on the
matroid polytope. It is well known that the so-called rank
inequalities together with the nonnegativity constraints provide a complete
linear description of the matroid polytope (see
Edmonds~\cite{Edmonds1971}). By essentially adding the 
forbidden set inequalities in an appropriate form, we obtain a complete linear
description of the cardinality constrained matroid
polytope which is the convex hull of the incidence vectors of those
independent sets that have a feasible cardinality. Moreover, we
show how the separation problem for the forbidden set
inequalities can be reduced to that for the rank inequalities. We also give
necessary and
sufficient conditions for a forbidden set inequality to be facet
defining.
\end{abstract}

\section{Introduction}
Let $E$ be a finite set and $\mathcal{I}$ a subset of the power set of
$E$. The pair $(E,\mathcal{I})$ is called an \emph{independence
  system} if (i) $\emptyset \in \mathcal{I}$ and (ii) whenever $I \in
\mathcal{I}$ then $J \in \mathcal{I}$ for all $J \subset I$. If $I
\subseteq E$ is in $\mathcal{I}$, then $I$ is called an
\emph{independent set}, otherwise it is called a \emph{dependent set}.
Dependent sets $\{e\}$ with $e\in E$ are called \emph{loops}.
For any set $F \subseteq E$, $B \subseteq F$ is called a \emph{basis}
of $F$ if $B \in \mathcal{I}$ and $B \cup \{e\}$ is dependent for all
$e \in F \setminus B$. The \emph{rank} of $F$ is defined by
$\rank(F):= 
\max \{|B| : B \mbox{ basis of } F\}$. The set of all bases $B$ of $E$
is called a \emph{basis system}. There are
many different ways to characterize when an independence system is a
matroid. For our purposes the following definition will be most
comfortable. $(E,\mathcal{I})$ is called a \emph{matroid}, and then it
will be denoted by $M=(E,\mathcal{I})$, if 
\begin{equation*} \label{def-matroid}
\mbox{(iii)} \hspace{1cm}I,J \in \mathcal{I}, |I|<|J| \; \Rightarrow \;
\exists \,K \subseteq J 
\setminus I : |I \cup K|=|J|, \; K \cup I \in \mathcal{I}.
\end{equation*}
Equivalent to (iii) is the requirement that for each $F \subseteq E$
all its bases have the same cardinality. Throughout the paper we deal
only with loopless matroids. The results of the paper can be easily
brought forward to matroids containing loops.

Let $M=(E,\mathcal{I})$ be a matroid. A set $F \subseteq E$ is said to
be \emph{closed} if $\rank(F)<\rank(F \cup \{e\})$ for all $e \in E
\setminus F$ and \emph{inseparable} if there are no sets $F_1 \neq \emptyset
\neq F_2$ with $F_1 \,\dot{\cup} \,F_2=F$ such that $\rank(F_1)+
\rank(F_2) \leq \rank(F)$.

Given any independence system $(E,\mathcal{I})$ and any weights $w_e
\in \mathbb{R}$ on the elements $e \in E$, the combinatorial
optimization problem $\max w(I) , I \in \mathcal{I}$, where
$w(I):=\sum_{e \in I} w_e$, is called the \emph{maximum weight
  independent set problem}. The convex hull of the incidence vectors
of the feasible solutions $I \in \mathcal{I}$ is called 
the \emph{independent set polytope} and will be denoted by
$P_{\mathcal{I}}(E)$. If $(E,\mathcal{I})$ is a matroid, then
$P_{\mathcal{I}}(E)$ is  also called the \emph{matroid polytope}.

As it is well known, the maximum weight independent set problem on a
matroid can be solved to optimality with the greedy
algorithm. Moreover, the matroid polytope $P_{\mathcal{I}}(E)$ is
determined by the rank inequalities and the nonnegativity constraints (see
Edmonds~\cite{Edmonds1971}), i.e.,  $P_{\mathcal{I}}(E)$ is the 
set of all points $x \in \mathbb{R}^E$ satisfying 
\begin{equation} \label{matroid}
\begin{array}{rcll}
\sum\limits_{e \in F} x_e & \leq & \rank(F) & \mbox{for all }
\emptyset \neq F \subseteq E,\\ 
x_e & \geq & 0 & \mbox{for all } e \in E.
\end{array}
\end{equation}
The rank inequality associated with $F$ is facet defining for
$P_{\mathcal{I}}(E)$ if and only if $F$ is closed and inseparable (see
Edmonds~\cite{Edmonds1971}).

Let $c=(c_1,\dots,c_m)$ be a finite sequence of integers with
$0 \leq c_1 < c_2 < \ldots < c_m$. Then, the \emph{cardinality constrained
independent set polytope} $\MAT$ is defined to be the convex hull of the
incidence vectors of the independent sets $I \in \mathcal{I}$ with
$|I|=c_p$ for some $p \in \{1,\dots,m\}$, that is, $\MAT = \mbox{conv}
\{\chi^I \in \mathbb{R}^E : I \in \mathcal{I}, \, |I|=c_p \mbox{ for
  some } p \in \{1,\dots,m\}\}$. If $(E,\mathcal{I})$ is a matroid,
then $\MAT$ is called the \emph{cardinality constrained matroid
  polytope}. In the next section we will see that, if $(E,\mathcal{I})$ is a
matroid, then the associated combinatorial optimization problem $\max
w^T x,\; x \in \MAT$ can be solved in polynomial time. Since $c=(c_1,\dots,c_m)$ is
linked to a cardinality constrained optimization problem, it is called a
\emph{cardinality sequence.} Throughout the paper we assume that $m \geq 2$.

The underlying basic problem of cardinality restrictions can be completely described in terms of linear inequalities.
Given a finite set $B$ and a cardinality sequence $c=(c_1,\dots,c_m)$,
the set $\CHS^{c}(B):=\{F \subseteq B
: |F|=c_p \mbox{ for some } p\}$ is called a \emph{cardinality
  homogenous set system}.
The polytope associated with $\CHS^c(B)$, namely the convex hull of
the incidence vectors of elements of $\CHS^c(B)$, is completely described by
the \emph{trivial inequalities} $0 \leq z_e \leq 1$, $e \in B$, the
\emph{cardinality bounds} $c_1 \leq \sum_{e \in B} z_e \leq c_m$, and the
\emph{forbidden set inequalities}
\begin{equation} \label{FS}
\begin{array}{l}
(c_{p+1} - |F|) \sum\limits_{e \in F} z_e\;-\;(|F| - c_p) \sum\limits_{e \in B
  \setminus F} z_e \leq 
c_p(c_{p+1}-|F|) \\
\hspace{0.5cm} \mbox{for all } F \subseteq B \mbox{ with } c_p < |F| <
c_{p+1} \mbox{ for some } p \in \{1,\dots,m-1\}.
\end{array}
\end{equation}
This result is due to Maurras~\cite{Maurras77} and Camion and Maurras~\cite{CM82}.
Gr\"otschel~\cite{Groetschel} rediscovered inequalities~\eqref{FS} independently and proved the same result.

In ~\cite{KS} the authors investigated cardinality constrained
cycle and path problems. They observed that inequalities \eqref{FS} define very low dimensional faces of the associated polyhedra. However, with a modified version of the cardinality
forcing inequalities they were able to provide
characterizations of the integer points of cardinality constrained
cycle and path polytopes by facet defining inequalities. 

In our context ``modified version'' means to replace $|F|$ by $\rank(F)$.
To this end, consider, for instance, the cardinality constrained graphic matroid.
The independence system is the collection of all forests.
Figure 1 illustrates the support graph of an ordinary forbidden set inequality.
The set of bold edges, denoted by $F$, is of forbidden cardinality, since $9$ is not in the cardinality sequence $c=(3,5,12,14)$.
The forbidden set inequality associated with $F$ has coefficients $3$ on the bold edges and $-4$ on the dashed edges.
The right hand side is $15$. As it is not hard to see, none of the incidence vectors of forests of feasible cardinality satisfies the inequality at equality. However, if we fill up $F$ with further edges such that we obtain an edge set, say $F'$, of rank $9$, then the resulting inequality, which is illustrated in Figure 2, remains valid. Moreover, there are forests of cardinality $5$ and $12$ whose incidence vectors satisfy the resulting inequality at equality.

\begin{figure}
\includegraphics[width=0.45\textwidth]{graphicMatroid1.epsi} \hfill  \includegraphics[width=0.45\textwidth]{graphicMatroid2.epsi}
\end{figure}

With respect to $M=(E,\mathcal{I})$, $\MAT = \mbox{conv}
\{\chi^I \in \mathbb{R}^E : I \in \mathcal{I} \cap \CHS^c(E)\}$. By
default, we assume that $c_m \leq \rank(E)$.
Our main
result is that the system
\begin{gather}
 \nonumber \FS_F(x) := (c_{p+1}-\rank(F)) x(F)-
(\rank(F)-c_p)x(E \setminus F)  \leq
c_p(c_{p+1}-\rank(F))\\ 
\hspace{0.5cm}\mbox{for all $F \subseteq E$ with $c_p <
    \rank(F) < c_{p+1}$ for some $p \in \{0,\dots,m-1\}$,} \label{eq_FS}
\end{gather}

\vspace{-0.8cm}
\begin{align}
x(E) & \geq c_1,& \label{eq_lowerBound}\\
x(E) & \leq c_m, &\label{eq_upperBound}\\
x(F) & \leq \rank(F) &\hspace{2cm}\mbox{for all } \emptyset \neq F
\subseteq E,\label{eq_rankInequalities}\\ 
x_e & \geq 0 &\hspace{2cm}\mbox{for all } e \in E\label{eq_nn}
\end{align}
completely describes $\MAT$. Here, for any $I \subseteq E$
we set $x(I):=\sum_{e \in I} x_e$. Of course, each $x \in \MAT$ satisfies
$c_1 \leq x(E) \leq c_m$. Inequalities~\eqref{eq_FS} are called {\em rank induced forbidden set inequalities}.
The inequality $\FS_F(x)
\leq  c_p(c_{p+1}-\rank(F))$ associated with $F$, where
$c_p<\rank(F)<c_{p+1}$, 
is valid as can be seen as follows. The incidence vector of any $I \in
\mathcal{I}$ of cardinality at most $c_p$ satisfies the inequality,
since $|I \cap F| = \rank(I \cap F) \leq c_p$:
\begin{eqnarray*} (c_{p+1}-\rank(F)) \chi^I(F)-
(\rank(F)-c_p)\chi^I(E \setminus F) & \leq &
(c_{p+1}-\rank(F)) \chi^I(F)\\ 
& \leq & (c_{p+1}-\rank(F)) c_p.
\end{eqnarray*}
The incidence vector of any $I \in
\mathcal{I}$ of cardinality at least $c_{p+1}$ satisfies also the
inequality, since $\rank(I \cap F) \leq \rank(F)$
and thus $\rank(I \cap (E \setminus F)) \geq
c_{p+1}-\rank(F)$: 
\begin{eqnarray*} 
& & (c_{p+1}-\rank(F)) \chi^I(F)-
(\rank(F)-c_p)\chi^I(E \setminus F)\\
 & \leq &
(c_{p+1}-\rank(F)) \rank(F)
-(\rank(F)-c_p)\chi^I(E \setminus F)\\
 & \leq &
(c_{p+1}-\rank(F)) \rank(F)
-(\rank(F)-c_p) (c_{p+1}- \rank(F))\\
 & = &  c_p(c_{p+1}-\rank(F)).
\end{eqnarray*}
However, it is not hard to see that some incidence vectors of
independent sets $I$ with $c_p < |I| < c_{p+1}$
violate the inequality. 

When $M=(E,\mathcal{I})$ is the trivial matroid, i.e., all $F
\subseteq E$ are independent sets, then $\mathcal{I} \cap \CHS^c(E) =
\CHS^c(E)$. Thus, cardinality constrained matroids are a
generalization of cardinality homogenous set systems.

The paper is organized as follows. In Section 2 we prove that the
system \eqref{eq_FS}-\eqref{eq_nn} provides a complete linear
description of the cardinality constrained matroid polytope. Next, we
will give sufficient 
conditions for the rank induced forbidden set inequalities to be facet
defining. Finally, we show that the separation problem for the
rank induced forbidden set inequalities can be reduced to that for the rank
inequalities. This results in a polynomial time separation routine
based on Cunningham's separation algorithm for the rank inequalities.
In Section 3 we briefly discuss some consequences for cardinality
constrained combinatorial optimization problems and in particular for
the intersection of two cardinality constrained matroid polytopes.

\section{Polyhedral analysis of $\MAT$}
Let $M=(E,\mathcal{I})$ be a matroid. As already mentioned,
$P_{\mathcal{I}}(E)$ is determined by \eqref{matroid}.
For any natural number $k$, the independence system $M' :=(E,
\mathcal{I'})$ defined by $\mathcal{I'} := \{I \in \mathcal{I} : |I|
\leq k\}$ is again a matroid and is called the $k$-\emph{truncation}
of $M$. Therefore, the matroid polytope $P_{\mathcal{I'}}^c(E)$
associated with the $k$-truncation of $M$ is defined by system 
\eqref{matroid}, where the rank inequalities are indexed with
$\mathcal{I'}$ instead of $\mathcal{I}$. Following an argument of
Gamble and Pulleyblank~\cite{GP}, the only set of the
$k$-truncation which might be closed and inseparable with respect to the
truncation, but not with respect to the original matroid $M$ is $E$
itself, and the rank inequality associated with $E$ is the cardinality
bound $x(E) \leq k$. Hence, in context of the original matroid $M$,
$P_{\mathcal{I'}}^c(E)$ is described by
\begin{equation} \label{<=k}
\begin{array}{rcll}
x(F) & \leq & \rank(F) & \mbox{for all } \emptyset \neq F
\subseteq E,\\ 
x(E) & \leq & k,\\
x_e & \geq & 0 & \mbox{for all } e \in E.
\end{array}
\end{equation}
Of course, the connection to cardinality constraints is obvious, since
$P_{\mathcal{I'}}^c(E)= P_{\mathcal{I}}^{(0,\dots,k)}(E)$.
The basis system of $M'$ is the set of all bases $B$ of $E$ (with
respect to $M'$) and in case of $\rank(E) \geq r_{\mathcal{I'}}(E)$
the bases are all of cardinality $k$. Assuming $\rank(E) \geq
r_{\mathcal{I'}}(E)$, the associated polytope 
$$\mbox{conv}\{\chi^B \in
\mathbb{R}^E : B \mbox{ basis of $E$ with respect to $M'$}\}$$
is determined by
\begin{equation} \label{=k}
\begin{array}{rcll}
x(F) & \leq & \rank(F) & \mbox{for all } \emptyset \neq F
\subseteq E,\\ 
x(E) & = & k,\\
x_e & \geq & 0 & \mbox{for all } e \in E.
\end{array}
\end{equation}
On a basis system of a matroid one can optimize in polynomial
time by application of the greedy algorithm. Thus, for each member $c_p$
of a cardinality sequence 
$c=(c_1,\dots,c_m)$ an optimal solution $I^p$ of the linear
optimization problem 
$\max w(I),\; I \in \mathcal{I}, |I|=c_p$ can be found
in polynomial time. The best of the solutions $I^p, p=1,\dots,m$ with
respect to the linear objective $w$ is then the optimal solution of 
$\max w(I),\; I \in \mathcal{I} \cap \CHS^c(E)$. Since
$0 \leq c_1< \dots < c_m \leq \rank(E) \leq |E|$ and thus $m \leq
|E|$, it can be found by at most $|E|+1$ applications of the greedy
algorithm.   

These preliminary remarks are sufficient to present our main
theorem. In the sequel, we denote the rank function by $r$ instead of
$r_{I}$. 
Given a valid inequality $ax \leq a_0$
with $a \in \mathbb{R}^E$, $F \subseteq E$ is said to be \emph{tight} if $a
\chi^F = a_0$. A valid inequality $ax \leq a_0$ is \emph{dominated} by
another valid inequality $bx \leq b_0$, if $\{x \in \MAT : ax = a_0\}
\subseteq \{x \in \MAT : bx = b_0\}$. It is said to be \emph{strictly
  dominated} by $bx \leq b_0$, if $\{x \in \MAT : ax = a_0\}
\subsetneq \{x \in \MAT : bx = b_0\}$.

\renewcommand{\rank}{r}
\subsection{A complete linear description} 
\begin{Theorem}
The cardinality constrained matroid polytope $\MAT$ is completely
described by system \eqref{eq_FS}-\eqref{eq_nn}.
\end{Theorem}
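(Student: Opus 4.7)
The plan is to prove $Q = \MAT$, where $Q$ denotes the polyhedron defined by \eqref{eq_FS}--\eqref{eq_nn}. The inclusion $\MAT \subseteq Q$ is the validity half and is essentially given in the excerpt: the rank inequalities \eqref{eq_rankInequalities} together with nonnegativity \eqref{eq_nn} are Edmonds' description of the matroid polytope, the cardinality bounds \eqref{eq_lowerBound}--\eqref{eq_upperBound} are immediate, and the validity of the rank induced forbidden set inequalities \eqref{eq_FS} has just been verified. What remains is to prove $Q \subseteq \MAT$.

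Take $\bar{x} \in Q$ and set $\bar{t} := \bar{x}(E) \in [c_1, c_m]$. I first dispose of the case $\bar{t} = c_p$ for some $p$: here $\bar{x}$ satisfies the rank inequalities, nonnegativity, and $x(E) = c_p$, so by Edmonds' description \eqref{=k} of the polytope of bases of the $c_p$-truncation of $M$, $\bar{x}$ lies in the convex hull of cardinality-$c_p$ bases of this truncation, which are exactly the cardinality-$c_p$ independent sets of $M$; hence $\bar{x} \in \MAT$. The nontrivial case is $c_p < \bar{t} < c_{p+1}$ for some $p$; setting $\lambda := (c_{p+1} - \bar{t})/(c_{p+1} - c_p) \in (0,1)$, I would construct $\bar{y}, \bar{z} \geq 0$ with $\bar{y}(F), \bar{z}(F) \leq r(F)$ for all $F$, $\bar{y}(E) = c_p$, $\bar{z}(E) = c_{p+1}$, and $\bar{x} = \lambda \bar{y} + (1-\lambda)\bar{z}$. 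Applying the first case to $\bar{y}$ and $\bar{z}$ then places both in $\MAT$, hence also $\bar{x}$.

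The heart of the argument, and the main obstacle, is establishing the existence of this convex decomposition. The naive choice $\bar{y} := (c_p/\bar{t})\bar{x}$ does lie in the $c_p$-truncation polytope (scaling down preserves every rank inequality), but it forces $\bar{z} = (c_{p+1}/\bar{t})\bar{x}$, and this may violate $\bar{z}(F) \leq r(F)$ on sets $F$ where $\bar{x}(F)$ is close to $r(F)$. So $\bar{y}$ must instead be chosen to be tight on such $F$. I would formulate the existence of a valid $\bar{y}$ as an LP feasibility problem --- find $\bar{y}$ with $0 \leq \bar{y} \leq \bar{x}/\lambda$, $\bar{y}(F) \leq r(F)$, $\bar{y}(F) \geq (\bar{x}(F) - (1-\lambda) r(F))/\lambda$ for all $F \subseteq E$, and $\bar{y}(E) = c_p$, where the second family of lower bounds encodes $\bar{z}(F) \leq r(F)$ --- and apply Farkas' lemma. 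The expected outcome is that any infeasibility certificate can, by ``uncrossing'' sets via the submodularity of $r$, be reduced to a single witness set $F^*$ with $c_p < r(F^*) < c_{p+1}$ for which the rank induced forbidden set inequality \eqref{eq_FS} is strictly violated by $\bar{x}$, contradicting $\bar{x} \in Q$. Executing this reduction cleanly --- matching the Farkas multipliers to the precise coefficients $c_{p+1}-r(F^*)$ and $r(F^*)-c_p$ appearing in \eqref{eq_FS} --- is where I expect the technical difficulty to lie.
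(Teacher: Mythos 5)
Your approach is genuinely different from the paper's. The paper proves the reverse inclusion $Q\subseteq\MAT$ indirectly: it takes an arbitrary valid inequality $bx\leq b_0$ for $\MAT$, normalizes $b_0\in\{-1,0,1\}$, partitions $E$ by the sign of the coefficients into $P,Z,N$, and shows by case enumeration that $bx\leq b_0$ is dominated by a cardinality bound, a nonnegativity constraint, a rank inequality, or a rank induced forbidden set inequality (with $F=P\cup Z$ in the interesting cases). You instead propose a primal decomposition: write $\bar{x}\in Q$ with $c_p<\bar{x}(E)<c_{p+1}$ as $\lambda\bar{y}+(1-\lambda)\bar{z}$ with $\bar{y}\in P_{\mathcal{I}}^{(c_p)}(E)$ and $\bar{z}\in P_{\mathcal{I}}^{(c_{p+1})}(E)$, using the description \eqref{=k}. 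Your treatment of the boundary case $\bar{x}(E)=c_p$ and your reduction of the whole problem to the existence of the decomposition are both correct, and your observation that the ``simple'' infeasibility certificates (a single lower-bound set $F$ against $\bar{y}(E)=c_p$ and $\bar{y}\geq 0$) translate exactly into the inequalities \eqref{eq_FS} is the right sanity check.

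However, there is a genuine gap at precisely the point you flag: you assert, but do not prove, that \emph{every} Farkas certificate of infeasibility of the LP for $\bar{y}$ can be uncrossed down to a single witness set $F^*$ yielding a violated inequality of \eqref{eq_FS} (or a violated rank or nonnegativity constraint). This is the entire content of the converse inclusion, and it is not routine. Your feasibility system has upper bounds coming from the submodular function $r$ together with the coordinatewise bound $\bar{y}\leq\bar{x}/\lambda$, lower bounds coming from the supermodular function $F\mapsto(\bar{x}(F)-(1-\lambda)r(F))/\lambda$ together with $\bar{y}\geq 0$, and the equation $\bar{y}(E)=c_p$; a general dual solution mixes tight sets from both families, and one must show that after uncrossing (using submodularity of $r$ and modularity of $\bar{x}$) the surviving obstruction is one of the listed inequalities and nothing else. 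This is essentially a generalized-polymatroid (paramodular pair) feasibility theorem in the sense of Frank, and invoking or reproving it is a substantial piece of the argument, not a technicality. As written, the proof establishes $\MAT\subseteq Q$ and the easy boundary case, but the core of $Q\subseteq\MAT$ remains a conjecture. If you carry out the uncrossing, your route would yield a constructive decomposition result that the paper's dominance argument does not provide; conversely, the paper's argument avoids all of this machinery at the cost of a longer case analysis.
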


\begin{proof}
Since all inequalities of system \eqref{eq_FS}-\eqref{eq_nn} are valid,
$P_{\mathcal{I}}^c(M)$ is contained in the polyhedron defined by
\eqref{eq_FS}-\eqref{eq_nn}. To show the converse, we
consider any valid inequality $bx \leq b_0$ for $P_{\mathcal{I}}^c(M)$
and associate with the inequality the following subsets of $E$:
\begin{eqnarray*}
P &:=& \{e \in E : b_e > 0\},\\
Z &:=& \{e \in E : b_e = 0\},\\
N &:=& \{e \in E : b_e < 0\}.\\
\end{eqnarray*}
We will show by case by case enumeration that the inequality $bx \leq
b_0$ is dominated by some inequality of the system
\eqref{eq_FS}-\eqref{eq_nn}. By definition, $E = P \dot{\cup}
Z \dot{\cup} N$, and hence, if $P=Z=N=\emptyset$, then $E=\emptyset$,
and it is nothing to show.
By a scaling argument we may assume that either $b_0=1$,
$b_0=0$, or $b_0=-1$.
\begin{enumerate}
\item[(1)] $b_0=-1$.
\begin{enumerate}
\item[(1.1)] $c_1=0$. Then $0 \in \MAT$, and hence $0=b \cdot 0 \leq -1$, a
  contradiction.
\item[(1.2)] $c_1>0$.    
\begin{enumerate}
\item[(1.2.1)] $P=Z=\emptyset, \,N \neq \emptyset$. Assume that there
  is some tight $I \in \mathcal{I}$ with $|I|=c_p$, $p \geq 2$. Then,
  for any $J \subset I$ with $|J|=c_1$ holds: $\chi^J \in \MAT$ and $b
  \chi^J > b \chi^I = -1$, a contradiction. Therefore, if any $I \in
  \mathcal{I} \cap \CHS^c(E)$ is tight, then $|I|=c_1$. Thus, $bx
  \leq -1$ is dominated by the cardinality bound $x(E) \geq c_1$.
\item[(1.2.2)]  $P \cup Z \neq \emptyset, \,N = \emptyset$. Then, $by \geq 0$
  for all $y \in \MAT$, a contradiction.
\item[(1.2.3)] $P \cup Z \neq \emptyset, \,N \neq \emptyset$. If
  $c_1 \leq r(P \cup Z)$, then there is some independent set $I
  \subseteq P \cup Z$ of cardinality $c_1$, and hence, $b \chi^I \geq
  0$, a contradiction. Thus, $c_1 > r(P \cup Z)$. Assume, for the sake
  of contradiction, that there is some tight independent set $J$ of
  cardinality $c_p$ with $p \geq 2$. If $J \subseteq N$, then the
  incidence vector of any $K \subset J$ with $|K|=c_1$ violates $bx
  \leq -1$. Hence, 
  $J \cap (P \cup Z) \neq \emptyset$. On the other hand, $J \cap N
  \neq \emptyset$ due to $c_p > c_1 > r(P \cup Z)$. However, by
  removing any $(c_p-c_1)$ elements in $N \cap J$, we obtain some
  independent set $K$ of cardinality $c_1$ whose incidence vector violates
  the inequality $bx \leq -1$, a contradiction. Therefore, if any $T
  \in \mathcal{I} \cap \CHS^c(E)$ is tight, then $|T|=c_1$. Thus, $bx
  \leq -1$ is dominated by the bound $x(E) \geq c_1$.
\end{enumerate}
\end{enumerate}
\item[(2)] $b_0=0$.
\begin{enumerate}
\item[(2.1)] $P \cup Z \neq \emptyset, \, N= \emptyset$. Then, either $bx
  \leq 0$ is not valid or $b=0$.
\item[(2.2)] $P=\emptyset, \, Z \cup N \neq \emptyset$. Then, $bx \leq 0$ is
  dominated by the nonnegativity constraints $x_e \geq 0$ for $e \in
  N$ or $b=0$.
\item[(2.3)] $P \neq \emptyset, \, N \neq \emptyset$.  
\begin{enumerate}
\item[(2.3.1)]  $c_1 >0$.  If $c_1 \leq r(P \cup Z)$,
  then there is 
  some independent set $I \subseteq P \cup Z$ with $I \cap P \neq
  \emptyset$ of cardinality $c_1$, and hence, $b \chi^I > 0$, a
  contradiction. Thus, $c_1 > r(P \cup Z)$. Assume, for the sake
  of contradiction, that there is some tight independent set $J$ of
  cardinality $c_p$ with $p \geq 2$. Since $c_p>c_1>r(P \cup Z)$ and
  $J$ is tight, $J \cap (P \cup Z) \neq \emptyset \neq J \cap N$.
  From here, the proof for this case can be finished as the proof for
  the case (1.2.3)  with $b_0=0$ instead of $b_0=-1$ in order to show
  that $bx \leq 0$ is dominated by the cardinality bound $x(E) \geq c_1$.
\item[(2.3.2)] $c_1 =0$. As in case (2.3.1), it follows immediately
  that $c_2 >r(P \cup Z)$, and if $I \in \mathcal{I} \cap \CHS^c(E)$ is
  tight, then $|I|=c_1=0$, that is, $I=\emptyset$, or
  $|I|=c_2$. Moreover, if $I  \in \mathcal{I}$ with $|I|=c_2$ is
  tight, then follows $|I \cap (P
  \cup Z)|= r(P \cup Z)$. Hence, $bx \leq b_0$ is dominated by the
  rank induced forbidden set inequality $\FS_F(x) \leq 0$ with $F=P \cup Z$.
\end{enumerate}
\end{enumerate}
\item[(3)] $b_0=1$.
\begin{enumerate}
\item[(3.1)] $P= \emptyset, \, Z \cup N \neq \emptyset$. Then, $b \leq
  0$, and hence $bx \leq 1$ is
  dominated by any nonnegativity constraint $x_e \geq 0$, $e \in
  E$. 
 \item[(3.2)] $P \cup Z \neq \emptyset, \, N =
   \emptyset$.
  Assume that there is some $I \in \mathcal{I}, I \notin \CHS^c(E)$
  with $|I| < c_m$ that violates $bx \leq 1$. Then, of course, all
  independent sets $J \supset I$ violate $bx \leq 1$, in particular,
  those $J$ with $|J|=c_m$, a contradiction. Hence, $bx \leq 1$ is not
  only a valid inequality for $\MAT$ but also for
  $P_{\mathcal{I}}^{(0,1,\ldots, c_m)}(E)$, that is, $bx \leq 1$ is
  dominated by 
  some inequality of the system \eqref{<=k} with $k=c_m$.
 \item[(3.3)] $P \neq \emptyset, \, N \neq \emptyset$. Let $p \in
   \{1,\dots,m\}$ be minimal such that there is a tight independent
   set $I^*$ of cardinality $c_p$. Of course, $c_p>0$, because otherwise
   $I^*$ could not be tight. If $p=m$, then $bx \leq 1$ is dominated by
   the cardinality bound $x(E) \leq c_m$, because then all tight $J \in
   \mathcal{I} \cap \CHS^c(E)$ have to be of cardinality
   $c_p=c_m$. So, let $0< c_p<c_m$. We distinguish 2 subcases.
   \begin{enumerate}
     \item[(3.3.1)] $c_p \geq r(P \cup Z)$. Suppose, for the
       sake of contradiction, that there is some tight independent set
       $I$ of cardinality $c_p$ such that $|I \cap
       (P \cup Z)| < r(P \cup Z)$. Then, $I \cap (P \cup Z)$ can
       be completed to a basis $B$ of $P \cup Z$, and since $|B|
       \leq |I|$, there is some $K \subseteq I \setminus B$ such
       that $I' := B \cup K \in \mathcal{I}$ and $|I'|=|I|$. $K$
       is maybe the empty set. Anyway, by construction, $I'$ is of
       cardinality $c_p$ and violates the inequality $bx \leq
       1$. Thus, $|I \cap (P \cup Z)| = r(P \cup Z)$. For the same
       reason, any tight $J \in \mathcal{I} \cap \CHS^c(E)$ satisfies
       $|J \cap (P \cup Z)| = r(P \cup Z)$, and since $p$ is minimal,
       $|J| \geq c_p$. Now, with similar arguments as in case (1.2.3)
       one can show that if $T  \in \mathcal{I} \cap \CHS^c(E)$ is
       tight, then $|T|=c_p$. Thus, $c_p=c_1 >0$ and $bx \leq 1$
       is dominated by the cardinality bound $x(E) \geq c_1$.
     \item[(3.3.2)] $c_p < r(P \cup Z)$. Following the argumentation
       line in (3.3.1), we see that $I \subseteq P \cup Z$ and $|I
       \cap P|$ has to be maximal for any tight independent set $I$ of
       cardinality $c_p$. Assume that $c_{p+1} \leq r(P \cup
       Z)$. Then, from any tight independent set $I$ with $|I|=c_p$ we
       can construct a tight independent set $J$ with $|J|=c_{p+1}$ by
       adding some elements $e \in Z$. However, it is not hard to see
       that there is no tight $K \in \mathcal{I} \cap \CHS^c(E)$ that
       contains some $e \in N$. Thus, when $c_{p+1} \leq r(P \cup Z)$,
       $bx \leq 1$ is dominated by the nonnegativity constraints $y_e
       \geq 0$, $e \in N$. Therefore, $c_{p+1} > r(P \cup Z)$. The
       following is now immediate: If $I \in \mathcal{I} \cap
       \CHS^c(E)$ is tight, then $|I|=c_p$ or $|I|=c_{p+1}$; if
       $|I|=c_p$, then $I \subset P \cup Z$, and if $|I|=c_{p+1}$,
       then $|I \cap (P \cup Z)|= r(P \cup Z)$ and $c_{p+1} > r(P \cup
       Z)$. Thus, $bx \leq 1$ is dominated by the rank induced forbidden set
       inequality $\FS_{P \cup Z}(x) \leq c_p(c_{p+1}-r(P \cup
       Z))$.
\end{enumerate}
\end{enumerate}
\end{enumerate}
\end{proof}

\subsection{Facets} \label{Sec:Facets}

We first study the facial structure of a single cardinality constrained matroid polytope $P_{\mathcal{I}}^{(k)}(E)$. All points of $P_{\mathcal{I}}^{(k)}(E)$ satisfy the equation $x(E)=k$, and hence, any inequality $x(F) \leq r(F)$ is equivalent to the inequality $x(E \setminus F) \geq k - r(F)$. Motivated by this observation, we introduce the following definitions. For any $F \subseteq E$, the number $r^k(F) := k - r(E \setminus F)$ is called the $k$-\emph{rank} of $F$. Due to the submodularity of $r$ we have $r^k(F_1) + r^k(F_2) \leq r^k (F)$ for all $F_1,F_2$ with $F = F_1 \dot{\cup} F_2$, and $F$ is said to be $k$-\emph{separable} if equality holds for some $F_1 \neq \emptyset \neq F_2$, otherwise $k$-\emph{inseparable}. Due to the equation $x(E)=k$, $\dim P_{\mathcal{I}}^{(k)}(E) \leq |E|-1$, and in fact, in the most cases we have equality. However, if $\dim P_{\mathcal{I}}^{(k)}(E) < |E|-1$, then at least one rank inequality $x(F) \leq r(F)$ with $\emptyset \neq F \subsetneq E$ is an implicit equation. As is easily seen, this implies that an inequality $x(F') \leq r(F')$ (or $x(F') \geq r^k(F')$) does not necessarily induce a facet of $P_{\mathcal{I}}^{(k)}(E)$, although $F$ is inseparable ($k$-inseparable). To avoid the challenges involved, we only characterize the polytopes $P_{\mathcal{I}}^{(k)}(E)$ of dimension $|E|-1$.

\begin{Lemma} \label{L:facets1}
 Let $M=(E,\mathcal{I})$ be a matroid and for any $k \in \mathbb{N}$, $0<k<r(E)$, $M_k=(E,\mathcal{I}_k)$ the $k$-truncation of $M$ with rank function $r_k$. Then, $E$ is inseparable with respect to $r_k$.
\end{Lemma}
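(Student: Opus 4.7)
The plan is to work directly from the definition of inseparability and the formula $r_k(F)=\min\{r(F),k\}$, which holds for every $F\subseteq E$ because a maximal independent set of $M_k$ contained in $F$ is either a basis of $F$ in $M$ (when $r(F)\le k$) or any independent subset of $F$ of size $k$ (when $r(F)>k$). Since $k<r(E)$ we have in particular $r_k(E)=k$.

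Suppose, for contradiction, that $E$ is separable with respect to $r_k$. Then there exist nonempty disjoint sets $F_1,F_2$ with $F_1\,\dot\cup\,F_2=E$ and
\begin{equation*}
r_k(F_1)+r_k(F_2)\le r_k(E)=k.
\end{equation*}
I would split into two cases according to whether the truncation is binding on the pieces:

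\textbf{Case 1:} $r(F_1)\le k$ and $r(F_2)\le k$. Then $r_k(F_i)=r(F_i)$, so the assumed inequality reads $r(F_1)+r(F_2)\le k$. But by subadditivity of the matroid rank function $r$ on disjoint sets, $r(E)\le r(F_1)+r(F_2)\le k$, contradicting the hypothesis $k<r(E)$.

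\textbf{Case 2:} Without loss of generality, $r(F_1)>k$. Then $r_k(F_1)=k$, and the inequality $r_k(F_1)+r_k(F_2)\le k$ forces $r_k(F_2)\le 0$, hence $r(F_2)=0$. Since $M$ is loopless and $F_2\neq\emptyset$, however, any singleton in $F_2$ is independent in $M$, so $r(F_2)\ge 1$, a contradiction.

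Both cases are impossible, so $E$ must be inseparable with respect to $r_k$. The argument is essentially a two-line case analysis; the only subtlety worth highlighting is the use of the looplessness assumption (stated at the start of the paper) to rule out Case 2, and the use of the strict inequality $k<r(E)$ to rule out Case 1. Neither step presents a genuine obstacle.
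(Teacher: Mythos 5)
Your proof is correct and follows essentially the same route as the paper's: the same case split on whether $r(F_1),r(F_2)\le k$ or one of them exceeds $k$, using subadditivity of $r$ in the first case and looplessness (to get positive rank of a nonempty part) in the second; the only cosmetic difference is that you argue by contradiction while the paper argues directly. No gaps.
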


\begin{proof}
 Let $E = F_1 \dot{\cup} F_2$ with $F_1 \neq \emptyset \neq F_2$ be any partition of $E$. We have to show that $r_k(F_1) + r_k(F_2) > r_k(E)$. By definition, $r_k(E)=k$. First, let $r(F_i) \leq k$ for $i=1,2$. Then, $r_k(F_i) = r(F_i)$ and consequently, $r_k(F_1) + r_k(F_2) = r(F_1)+r(F_2) \geq r(E)>k$ due to the submodularity of $r$. Next, let w.l.o.g. $r(F_1)>k$. Then, $r_k(F_1) = k$ and, since $F_2 \neq \emptyset$, $r_k(F_2)>0$. Thus,  $r_k(F_1) + r_k(F_2)= k + r_k(F_2) >k$.
\end{proof}

\begin{Lemma} \label{L:facets2}
 Let $M=(E,\mathcal{I})$ be a matroid, $M_k=(E,\mathcal{I}_k)$ its $k$-truncation with rank function $r_k$, $\emptyset \neq F \subseteq E$, and $\bar{F} = E \setminus F$ be closed with $r(\bar{F})<k<r(E)$. Then, $F$ is $k$-inseparable with respect to $r_k$. 
\end{Lemma}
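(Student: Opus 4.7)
The goal is to verify the defining condition of $k$-inseparability directly. For every partition $F = F_1 \,\dot{\cup}\, F_2$ with $F_1, F_2 \neq \emptyset$, one must show
\[ r_k^k(F_1) + r_k^k(F_2) \;<\; r_k^k(F), \]
where $r_k^k(S) := k - r_k(E \setminus S)$. Since $r(\bar{F}) < k$ one has $r_k(\bar{F}) = r(\bar{F})$, and after simple algebra the target inequality is equivalent to
\[ k + r(\bar{F}) \;<\; r_k(\bar{F} \cup F_1) + r_k(\bar{F} \cup F_2). \]
The plan is to establish this by a three-way case analysis, using the identity $r_k = \min\{r, k\}$ and splitting on whether $r(\bar{F} \cup F_1)$ and $r(\bar{F} \cup F_2)$ exceed $k$.

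In the case where both $r(\bar{F} \cup F_i) \leq k$, I would replace $r_k$ by $r$ on the right hand side and apply submodularity to the sets $\bar{F} \cup F_1$ and $\bar{F} \cup F_2$, whose union is $E$ and whose intersection is $\bar{F}$ (since $F_1 \cap F_2 = \emptyset$), obtaining
\[ r(\bar{F} \cup F_1) + r(\bar{F} \cup F_2) \;\geq\; r(E) + r(\bar{F}) \;>\; k + r(\bar{F}), \]
where the strict inequality uses the hypothesis $r(E) > k$. In the case where both $r(\bar{F} \cup F_i) > k$, the right hand side collapses to $2k$ and the inequality reduces to $k > r(\bar{F})$, which is again given. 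In the remaining mixed case, say $r(\bar{F} \cup F_1) \leq k < r(\bar{F} \cup F_2)$, the right hand side equals $r(\bar{F} \cup F_1) + k$, so it suffices to show $r(\bar{F} \cup F_1) > r(\bar{F})$. This is exactly where closedness of $\bar{F}$ enters: since $F_1 \subseteq E \setminus \bar{F}$ is nonempty, picking any $e \in F_1$ gives $r(\bar{F} \cup \{e\}) > r(\bar{F})$ by the definition of closedness, and monotonicity of $r$ then yields $r(\bar{F} \cup F_1) > r(\bar{F})$.

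No real obstacle arises; the proof is essentially bookkeeping. It is worth observing that the three hypotheses play complementary roles: $r(E) > k$ is consumed only in the first case, closedness of $\bar{F}$ only in the mixed case, and $r(\bar{F}) < k$ both in the initial reformulation (to ensure $r_k(\bar{F}) = r(\bar{F})$) and in the all-saturated case. The main point to keep in mind is simply that truncation breaks the clean submodularity identity, which is why the mixed case must be handled separately and the closedness assumption cannot be dispensed with.
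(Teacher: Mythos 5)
Your proof is correct and complete, but it takes a genuinely different route from the paper's. The paper argues at the level of explicit independent sets: it first disposes of the degenerate case in which every $k$-element independent set meeting $\bar{F}$ in a basis of $\bar{F}$ avoids $F_1$ or $F_2$, and otherwise picks such a set $I'$ meeting both parts, augments it by one element (this is where $k<r(E)$ enters), deletes one element from each of $F_1$ and $F_2$, and finishes by a cardinality count $r_k^k(F_1)+r_k^k(F_2) \leq |I_1\cap F_1|+|I_2\cap F_2| < |I_1\cap F| = r_k^k(F)$. You instead reduce $k$-inseparability to the single inequality $k+r(\bar{F}) < r_k(\bar{F}\cup F_1)+r_k(\bar{F}\cup F_2)$, invoke the standard identity $r_k=\min\{r,k\}$ (which the paper itself uses without proof in Lemma~\ref{L:facets1}), and dispatch the three cases by submodularity of $r$ applied to $\bar{F}\cup F_1$ and $\bar{F}\cup F_2$, by the bound $r(\bar{F})<k$, and by closedness of $\bar{F}$, respectively. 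Your version is shorter, makes transparent exactly where each hypothesis is consumed, and avoids the element-level bookkeeping; the paper's construction has the side benefit of exhibiting the tight independent sets explicitly, which is closer in spirit to the rank computations it performs later (e.g.\ in Lemma~\ref{L:facets3}), but as a proof of this lemma your submodularity argument is entirely adequate.
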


\begin{proof}
$r(\bar{F})<k$ implies $r_k(\bar{F})=r(\bar{F})$, and since beyond it $\bar{F}$ is closed with respect to $r$, it is also closed with respect to $r_k$. 
Let $F = F_1 \dot{\cup} F_2$ be a proper partition of $F$. We have to show that $r^k_k(F_1)+r^k_k(F_2) < r^k_k(F)$. First, suppose that $I \in \mathcal{I}$ with $|I|=k$ and $|I \cap \bar{F}|= r_k(\bar{F})$ implies $I \cap F_1 = \emptyset$ or $I \cap F_2 = \emptyset$.  Since $\bar{F}$ is closed with respect to $r_k$, it follows that $r_k^k(F_1)=r^k_k(F_2)=0$, while $r^k_k(F)= k - r_k(\bar{F}) >0$. So assume that there is some independent set $I'$ of cardinality $k$ such that $|I' \cap \bar{F}|=r_k(\bar{F})$ and $I' \cap F_i \neq \emptyset$ for $i=1,2$. Since $k<r(E)$, there is some element $e$ such that $I := I' \cup \{e\}$ is independent with respect to $r$. Set $I_1 := I \setminus \{f_1\}$ and $I_2 := I \setminus \{f_2\}$ for $f_1 \in I \cap F_1, f_2 \in I \cap F_2$. Then,
$r^k_k(F_1) \leq |I_1 \cap F_1|$ and $r^k_k(F_2) \leq |I_2 \cap F_2|$. Hence, $r^k_k(F_1)+r^k_k(F_2) \leq |I_1 \cap F_1|+|I_2 \cap F_2| < |I_1 \cap F_1|+|I_1 \cap F_2| = |I_1 \cap F| = r^k_k(F)$.
\end{proof}

\begin{Lemma} \label{L:facets3}
Let $M=(E,\mathcal{I})$ be a matroid, $\emptyset \neq F \subseteq E$, and $A$ the matrix whose rows are the incidence vectors of $I \in \mathcal{I}$ with $|I|=k$ that satisfy the inequality $x(F) \geq r^k(F)$ at equality. Moreover, denote by $A_F$ the submatrix of $A$ restricted to $F$. Then, rank$(A_F) = |F|$ if and only if $r^k(F) \geq 1$, $\bar{F} := E \setminus F$ is closed, and (i) $F$ is $k$-inseparable or (ii) $k<r(E)$.
\end{Lemma}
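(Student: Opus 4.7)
The plan is to reduce $\operatorname{rank}(A_F)$ to a rank computation on the contracted matroid $M' := M/\bar F$ on ground set $F$, whose rank function is $r'(K) = r(\bar F \cup K) - r(\bar F)$. Writing $s := r^k(F) = k - r(\bar F)$, I would first observe two structural facts: $M'$ is loopless iff $\bar F$ is closed, and every tight $I$ decomposes as $I = B \dot{\cup} K$ with $B = I \cap \bar F$ a basis of $\bar F$ in $M$ (forced by the tightness $|I \cap \bar F| = r(\bar F)$) and $K = I \cap F$ an independent set of size $s$ in $M'$; conversely, for any basis $B$ of $\bar F$ in $M$ and any independent set $K$ of size $s$ in $M'$, $B \cup K$ is tight. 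Since the $F$-columns of $\chi^I$ only depend on $K$, $\operatorname{rank}(A_F)$ equals the rank of the matrix $C$ whose rows are the incidence vectors in $\mathbb{R}^F$ of the independent sets of size $s$ in $M'$.

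The two easy necessary conditions then fall out at once: if $s \leq 0$, the matrix $C$ has at most the single zero row, so $\operatorname{rank}(C) < |F|$; and if $\bar F$ is not closed, some $e \in F$ is a loop of $M'$, so the $e$-column of $C$ vanishes. This forces $r^k(F) \geq 1$ and $\bar F$ closed.

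Assuming henceforth $s \geq 1$ and $\bar F$ closed so that $M'$ is loopless, the main idea is to read off $\operatorname{rank}(C)$ from the $s$-truncation $M'' := (M')_s$: the rows of $C$ are exactly the incidence vectors of the bases of $M''$. Using the standard fact that the base polytope of a loopless matroid $N$ has affine dimension $|N| - c(N)$, where $c(N)$ is the number of connected components, together with the observation that $s \geq 1$ adds one dimension when passing from affine span to linear span (since the relevant flat avoids the origin), one obtains $\operatorname{rank}(C) = |F| - c(M'') + 1$. Hence the lemma reduces to showing that $M''$ is connected iff (i) or (ii) holds.

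This last step is where the real work lies and splits into two cases. If $k < r(E)$, then $s < r'(F)$; a would-be disconnection $F = F_1 \dot{\cup} F_2$ of $M''$ would give $r''(F_i) \geq 1$ (no loops) and hence $r''(F_i) < s$, so $r''(F_i) = r'(F_i)$, yielding $r'(F_1) + r'(F_2) = s < r'(F)$, which contradicts submodularity of $r'$. Thus $M''$ is automatically connected and (ii) alone suffices. If $k = r(E)$, then $M'' = M'$, and substituting $r'(K) = r(\bar F \cup K) - r(\bar F)$ together with $k = r(E)$ into the connectedness inequality $r'(F_1) + r'(F_2) > r'(F)$ rewrites it literally as $r^k(F_1) + r^k(F_2) < r^k(F)$ for every proper partition of $F$, i.e., as $F$ being $k$-inseparable (note $r_k = r$ in this case, so the two flavours of $k$-inseparability coincide). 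The main obstacle I anticipate is keeping the bookkeeping between $r$, $r_k$, $r^k$, and $r^k_k$ straight across the two subcases so that the final statement matches the lemma verbatim.
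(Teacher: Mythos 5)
Your argument is correct, but it takes a genuinely different route from the paper's. You contract $\bar F$ to pass to $M' := M/\bar F$ on ground set $F$ (rank function $r'(K) = r(\bar F \cup K) - r(\bar F)$), identify the rows of $A_F$ with the incidence vectors of the bases of the $s$-truncation $M'' := (M')_s$ with $s = r^k(F)$ (the decomposition of tight sets as ``basis of $\bar F$ plus independent set of $M'$ of size $s$'' is the standard characterization of independence in a contraction and is used correctly in both directions), and then invoke the classical formula that the base polytope of a matroid $N$ has affine dimension $|N|$ minus the number of connected components, so that everything reduces to deciding when $M''$ is connected; your two subcases ($s < r'(F)$ forces connectivity because a separator of a loopless truncation would violate subadditivity of $r'$, and $s = r'(F)$ translates connectivity of $M'$ verbatim into $k$-inseparability of $F$) both check out. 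The paper instead works directly with the matrix $A_F$: necessity of ``(i) or (ii)'' is shown by exhibiting the explicit kernel vector $\lambda = r^{k}(F_2)\chi_F^{F_1} - r^{k}(F_1)\chi_F^{F_2}$, sufficiency for $k = r(E)$ is a hands-on basis-exchange argument (partition $F$ by the sign of a hypothetical kernel vector, build tight bases $J \cup B_1$, $J \cup B_2$ and an exchange set, and force $F$ to be $k$-separable), and the case $k < r(E)$ is delegated to the $k$-truncation via Lemmas \ref{L:facets1} and \ref{L:facets2}. What your approach buys is uniformity---a single equality $\mathrm{rank}(A_F) = |F| - c(M'') + 1$ settles both directions and both subcases at once---and it makes transparent why condition (ii) alone suffices when $k < r(E)$; the price is importing the base-polytope dimension formula as a black box, whose standard proof is essentially the same exchange argument the paper carries out by hand, so the combinatorial core is the same even though the packaging differs.
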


\begin{proof}
\emph{Necessity.} The inequality $x(F) \geq r^k(F)$ is valid for $P_{\mathcal{I}}^{(k)}(E)$. As is easily seen, if $r^k(F) \leq 0$, then  $\mbox{rank}(A_F) < |F|$.  Next, assume that $\bar{F}$ is not closed. Then, there is some $e \in F$ such that $r(\bar{F} \cup \{e\})=r(\bar{F})$ which is equivalent to $r^k(F)=r^k(F \setminus \{e\})$. Thus, $x(F) \geq r^k(F)$ is the sum of the inequalities $x(F \setminus \{e\}) \geq r^k(F \setminus \{e\})$ and $x_e \geq 0$. This implies $\chi^I_e=0$ for all incidence vectors of independent sets $I$ with $|I|=k$ satisfying $x(F) \geq r^k(F)$ at equality. Again, it follows $\mbox{rank}(A_F) < |F|$. Finally, suppose that
neither $k<r(E)$ nor $F$ is $k$-inseparable. Then, $k=r(E)$ and $F$ is $r(E)$-separable. Thus, the inequality $x(F) \geq r^{r(E)}(F)$ is the sum of the valid inequalities $x(F_1) \geq r^{r(E)}(F_1)$ and $x(F_2) \geq r^{r(E)}(F_2)$ for some $F_1 \neq \emptyset \neq F_2$ with $F = F_1 \dot{\cup} F_2$. Setting $\lambda := r^{r(E)}(F_2) \chi_F^{F_1}-r^{r(E)}(F_1) \chi_F^{F_2}$, we see that for any $|F| \times |F|$ submatrix $\tilde{A}_F$ of $A_F$ we have $\tilde{A}_F \lambda = 0$, that is, the columns of $\tilde{A}_F$ are linearly dependent which implies $\mbox{rank}(A_F) < |F|$.

\emph{Suffiency.} First, let $k=r(E)$. Suppose $\mbox{rank}(A_F)<|F|$.  Then, $A_F \lambda = 0$ for some $\lambda \in \mathbb{R}^F, \, \lambda \neq 0$. Since $\bar{F}$ is closed and $r^k(F) \geq 1$ (that is, $r(\bar{F})<k$), for each $e \in F$ there is an independent set $I$ with $|I|=k$ that contains $e$ and whose incidence vector satisfies $x(F) \geq r^k(F)$ at equality. Thus, $A_F$ does not contain a zero-column. Moreover, $A_F \geq 0$, and hence, $F_1 := \{e \in F : \lambda_e > 0\}$ and $F_2 := \{e
 \in F : \lambda_e \leq 0\}$ defines a proper partition of $F$. Let $J \subseteq \bar{F}$ with $|J|=r(\bar{F})$ be an independent set. For $i=1,2$, let $B_i \subseteq F$ be an independent set such that $J \cup B_i$ is a basis of $E$ and $J \cup (B_i \cap F_i)$ is a basis of $\bar{F} \cup F_i$. Set $S_i :=B_i \cap F_i$ and $T_i := B_i \setminus S_i$
($i=1,2$). By construction, $T_1 \subseteq F_2$ and $T_2 \subseteq
 F_1$. By matroid axiom (iii), to $J \cup S_1$ there is some
 $U_1 \subseteq J \cup B_2$ such that $K := J \cup S_1 \cup U_1$ is a
 basis of $F$. Clearly, $U_1 \subseteq (B_2 \cap F_2) = S_2$.
Since the incidence vectors of $J \cup B_1$ and $K$ are rows of $A$, it follows immediately $\lambda(T_1) = \lambda(U_1)$.
With an analogous 
 construction one can show that there is some $U_2 \subseteq S_1$ such
 that $\lambda(U_2)=\lambda(T_2)$. It follows,
 $\lambda(T_2) = -\lambda(S_2)  \geq -
 \lambda(U_1) = -\lambda(T_1) = \lambda(S_1) \geq
 \lambda(U_2) = \lambda(T_2)$. Thus, between all terms we have equality
 implying $\lambda(S_1) = \lambda(U_2)$. Moreover, since $U_2
 \subseteq S_1$ and $\lambda_e >0$ for all $e \in S_1$, it follows $S_1
 = U_2$. Hence, $K = J \cup S_1 \cup S_2$. This, in turn, implies that $F$ is 
$k$-separable, a contradiction.

It remains to show that the statement is true if $k<r(E)$. Let $M_k = (E,\mathcal{I}_k)$ be the $k$-truncation of $M$ with rank function $r_k$. By hypothesis, all conditions of Lemma \ref{L:facets2} hold. Hence, $F$ is $k$-inseparable with respect to $r_k$. Thus, all conditions of the lemma hold for $r_k$ instead of $r$ and hence, $\mbox{rank}(A_F) = |F|$.
\end{proof}

\begin{Theorem} \label{T:facets4}
 Let $M=(E,\mathcal{I})$ be a matroid and $k \in \mathbb{N}$, $0 < k \leq r(E)$.
\begin{enumerate}
 \item[(a)] $P_{\mathcal{I}}^{(k)}(E)$ has dimension $|E|-1$ if and only if $E$ is inseparable or $k < r(E)$.
 \item[(b)] Let $\dim P_{\mathcal{I}}^{(k)}(E) = |E|-1$ and $\emptyset \neq F \subsetneq E$. The inequality $x(F) \leq \rank(F)$ defines a facet of $P_{\mathcal{I}}^{(k)}(E)$ if and only if $F$ is closed and inseparable, $r(F) < k$, and (i) $\bar{F} := E \setminus F$ is $k$-inseparable or (ii) $k < r(E)$.
\end{enumerate}
\end{Theorem}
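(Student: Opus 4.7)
The plan is to prove (a) and (b) in turn, using Lemmas~\ref{L:facets1}, \ref{L:facets2}, and~\ref{L:facets3} as the main tools, together with a reduction of parts of the statement to smaller matroids, namely the restriction $M|_F$ and the contraction $M/F$. For (a), the polytope lies in the hyperplane $x(E) = k$, so $\dim \leq |E|-1$. Necessity is immediate: if $k = r(E)$ and $E = E_1 \,\dot\cup\, E_2$ with $r(E_1)+r(E_2) = r(E) = k$, every independent set $I$ of size $k$ satisfies $|I \cap E_i| = r(E_i)$, giving the implicit equation $x(E_1) = r(E_1)$. For sufficiency, Lemma~\ref{L:facets1} reduces the case $k < r(E)$ to the case $k = r(E)$ with $E$ inseparable, which I would handle by the standard basis-exchange argument: a linear equation satisfied by all bases forces equal coefficients on any two basis-exchangeable elements, and inseparability of $E$ guarantees a single equivalence class on $E$.

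For (b), I handle necessity and sufficiency separately. For necessity, each of the four conditions is required: $r(F) < k$, because otherwise $x(F) \leq r(F)$ is implied by $x(E) = k$ together with the non-negativity constraints on $\bar F$; $F$ closed, because otherwise for a witness $e \in \bar F$ with $r(F \cup \{e\}) = r(F)$ the inequality is the sum of $x(F \cup \{e\}) \leq r(F \cup \{e\})$ and $-x_e \leq 0$; $F$ inseparable, because otherwise it decomposes as $x(F_1) \leq r(F_1)$ plus $x(F_2) \leq r(F_2)$ over a nontrivial partition. For the last clause ``$\bar F$ $k$-inseparable or $k < r(E)$'', I rewrite $x(F) \leq r(F)$ as $x(\bar F) \geq r^k(\bar F)$ and apply Lemma~\ref{L:facets3} with $G = \bar F$: if both clauses fail, then $\mbox{rank}(A_{\bar F}) < |\bar F|$ yields a vector $c \in \mathbb{R}^{\bar F}$ in the kernel of $A_{\bar F}^T$ that is linearly independent from $\mathbf{1}_{\bar F}$ (since $\mathbf{1}_{\bar F}^T v|_{\bar F} = r^k(\bar F) \geq 1 \neq 0$ for tight $v$), giving an extra equation $c^T x|_{\bar F} = 0$ on the face and dropping its dimension below $|E|-2$.

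For sufficiency I show that the face $\{x(F) = r(F)\}$ has dimension exactly $|E| - 2$. The tight incidence vectors are precisely $\chi^{B_F \cup J}$ where $B_F$ is a basis of $F$ and $J$ is a basis of the truncation $(M/F)_{k - r(F)}$. Assuming $a^T x = a_0$ holds for all such vectors, I aim to show that $a$ is constant on $F$ and on $\bar F$, whence $a^T x = a_0$ must be a combination of $x(E) = k$ and $x(F) = r(F)$. Fixing $B_F$ and varying $J$, the sum $\sum_{e \in J} a_e$ must be constant over bases of $(M/F)_{k-r(F)}$, and by part~(a) applied to $M/F$ with $k' = k - r(F)$ this constancy forces $a|_{\bar F}$ constant provided $\bar F$ is inseparable in $(M/F)_{k'}$ or $k' < r_{M/F}(\bar F) = r(E)-r(F)$. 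The latter is exactly $k < r(E)$ (case (ii)); the former I will deduce from $\bar F$ being $k$-inseparable in $M$ (case (i)). Symmetrically, fixing $J$ and varying $B_F$ over bases of $M|_F$, inseparability of $F$ together with part~(a) forces $a|_F$ constant.

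The main obstacle will be the bookkeeping between inseparability notions on $M$, $M|_F$, $M/F$, $M_k$, and $(M/F)_{k-r(F)}$: specifically, verifying that $\bar F$ being $k$-inseparable in $M$ implies $\bar F$ inseparable as a ground set of the matroid $(M/F)_{k-r(F)}$. I would establish this via the identity $r_{(M/F)_{k-r(F)}}^{k-r(F)}(G) = \max(r^k(G),0)$ for $G \subseteq \bar F$, which follows from direct computation using the definitions of contraction and truncation. Once this translation is secured, the remainder of the argument is a straightforward composition of Lemmas~\ref{L:facets1}--\ref{L:facets3} and part~(a).
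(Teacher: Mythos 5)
Your proposal is correct in substance but takes a genuinely different route for the core of the argument, the sufficiency direction of (b). The paper works directly: it takes $|F|$ linearly independent incidence vectors of bases of the restriction $M'=(F,\mathcal{I}')$ (via Edmonds' facet characterization), pads each with one fixed set $Q\subseteq\bar F$, takes $|\bar F|$ linearly independent tight vectors supported on $\bar F$ supplied by Lemma~\ref{L:facets3}, pads each with one fixed basis $P$ of $F$, and verifies that the resulting $|E|-1$ tight vectors are linearly independent. You instead argue indirectly: every linear equation valid on the face must be constant on $F$ (vary $B_F$ with $J$ fixed, apply part~(a) to $M|_F$) and constant on $\bar F$ (vary $J$ with $B_F$ fixed, apply part~(a) to $(M/F)_{k-r(F)}$), hence lies in the span of $x(E)=k$ and $x(F)=r(F)$. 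Both proofs rest on the same product structure $I=B_F\,\dot\cup\,J$ of the tight sets; what yours buys is that Lemma~\ref{L:facets3} is no longer needed for sufficiency (you use it only, and legitimately, for the necessity of the last clause, where the paper instead writes $x(\bar F)\ge r^k(\bar F)$ as a sum of two valid inequalities), at the price of the translation between $k$-inseparability of $\bar F$ in $M$ and inseparability of $\bar F$ as the ground set of $M/F$. You correctly flag this as the crux, and it does go through: for a proper partition $\bar F=\bar F_1\,\dot\cup\,\bar F_2$ one has $r^k(\bar F_i)=k-r(F)-r_{M/F}(\bar F_{3-i})$, so in the only case where clause~(i) is needed, namely $k=r(E)$, $k$-inseparability of $\bar F$ in $M$ is literally inseparability of $\bar F$ in $M/F$; note also that closedness of $F$ is exactly what keeps $M/F$ loopless, so part~(a) applies to it. Your basis-exchange proof of (a) for $k=r(E)$ is the standard connectivity argument and replaces the paper's appeal to Edmonds' facet theorem; both are fine.

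One shared point of looseness deserves mention: for the necessity of $r(F)<k$, the observation that $x(F)\le r(F)$ is \emph{implied} by $x(E)=k$ and the nonnegativity constraints does not by itself preclude its defining a facet, and the paper's trichotomy (``not closed, implicit equation, or empty face'') is not exhaustive either. If $r(F)=k$ and $\bar F=\{e\}$ with $x_e\ge 0$ facet defining, then $x(F)\le r(F)$ induces that very facet; for instance $M=U_{2,3}\oplus U_{1,1}$, $k=2$, $F$ the $U_{2,3}$ part satisfies all remaining hypotheses. So what your argument (like the paper's) actually shows in the case $r(F)\ge k$ is redundancy of the inequality, not non-facetness; the clause $r(F)<k$ should be read as excluding rank inequalities that merely duplicate nonnegativity facets. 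Apart from this shared caveat, the proposal is a sound and somewhat more self-contained alternative to the paper's proof.
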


\begin{proof}
(a) First, let $k=r(E)$. For any $\emptyset \neq F \subseteq E$, the rank inequality $x(F) \leq r(F)$ defines a facet of $P_{\mathcal{I}}(E)$ if and only if $F$ is closed and inseparable. Consequently, the polytope $P_{\mathcal{I}}^{(r(E))}(E)$, which is a face of $P_{\mathcal{I}}(E)$, has dimension $|E|-1$ if and only if $E$ is inseparable. Next, let $0 < k < r(E)$. By Lemma \ref{L:facets1}, $E$ is inseparable with respect to the rank function $r_k$ of the $k$-truncation $M_k=(E,\mathcal{I}_k)$. Consequently, $x(E) \leq r_k(E)=k$ defines a facet of $P_{\mathcal{I}_k}(E)$ and hence, $\dim P_{\mathcal{I}}^{(k)}(E) = |E|-1$.

\vspace{\baselineskip}
(b) Clearly, $x(F) \leq r(F)$ does not induce a facet of $P_{\mathcal{I}}^{(k)}(E)$ if $F$ is separable or not closed, since $\dim P_{\mathcal{I}}^{(k)}(E) = |E|-1$, and hence, any inequality that is not facet defining for $P_{\mathcal{I}}(E)$ is also not facet defining for $P_{\mathcal{I}}^{(k)}(E)$. Next, if $r(F) \geq k$, then holds obviously $x(F) \leq x(E)=k \leq r(F)$, that is, either $F$ is not closed, $x(F) \leq r(F)$ is an implicit equation, or the face induced by $x(F) \leq r(F)$ is the emptyset. Finally, assume that $F$ is closed but neither (i) nor (ii) holds. Then, $k=r(E)$ and $\bar{F}$ is $k$-separable. Thus, there are nonempty subsets $\bar{F}_1, \bar{F}_2$ of $\bar{F}$ with $\bar{F} = \bar{F}_1 \dot{\cup} \bar{F}_2$ such that $r^k(\bar{F})= r^k(\bar{F}_1)+ r^k(\bar{F}_2)$. Now, the inequality $x(\bar{F}) \geq r^{k}(\bar{F})$, which is equivalent to $x(F) \leq r(F)$, is the sum of the valid inequalities $x(\bar{F}_i) \geq r^k(\bar{F}_i)$, $i=1,2$, both not being implicit equations.

To show the converse, let $F$ satisfy all conditions mentioned in Theorem \ref{T:facets4} (b). The restriction of $M=(E,\mathcal{I})$ to $F$ is again a matroid. Denote it by $M'=(F, \mathcal{I}')$ and its rank function by $r'$. $F$ remains inseparable with respect to $r'$. Thus, the restriction of $x(F) \leq r(F)$ to $F$, denoted by $x_F(F) \leq r(F)=r'(F)$, induces a facet of $P_{\mathcal{I}'}(F)$. A set of affinely independent vectors whose sum of components is equal to some $\ell$, is also linearly independent. Thus, there are $|F|$ linearly independent vectors $\chi^{I'_j}$ of independent sets $I'_j \in \mathcal{I'}$ of cardinality $r'(F)$ ($j=1,\ldots,|F|$). The sets $I'_j$ are also independent sets with respect to $\mathcal{I}$. Due to the matroid axiom (iii), $P:=I'_1$ can be completed to an independent set $I_1$ of cardinality $k$. Since $P \subseteq F$ and $|P|=r(F)$, $Q := I_1 \setminus P \subseteq \bar{F}$. Now, $I'_j, I_1 \in \mathcal{I}$, $I'_j \subseteq F$, and $r(F) = |I'_j| < |I_1|=k$. Hence, $I_j := I'_j \cup Q \in \mathcal{I}$ for all $j$. Consequently, we have $|F|$ linearly independent vectors $\chi^{I_j} \in P_{\mathcal{I}}^{(k)}(E)$ satisfying $x(F) \leq r(F)$ at equality.

Next, let $A$ be the matrix whose rows are the incidence vectors of tight independent sets and $A_{\bar{F}}$ its restriction to $\bar{F}$. By Lemma \ref{L:facets3}, $A_{\bar{F}}$ contains a $|\bar{F}| \times |\bar{F}|$ submatrix $B$ of full rank. By construction, each row $B_i$ of $B$ is an incidence vector of an independent set $J'_i \subseteq \bar{F}$ with $|J'_i|=r^k(\bar{F})$. W.l.o.g. we may assume that $B_1 = \chi^{Q}$, that is, $Q = J'_1$. By a similar argument as above, the independent sets $J_i := J'_i \cup P$ are tight and its incidence vectors are linearly independent. 

Alltogether we have $|F|$ linearly independent vectors $\chi^{I_j}$ with $I_j \cap \bar{F} = Q$ and $|\bar{F}|$ linearly independent vectors $\chi^{J_i}$ with $J_i \cap F = P$, where $J_1=I_1$. As is easily seen, this yields a system of $|F|+|\bar{F}|-1 = |E|-1$ linearly independent vectors satisfying $x(F) \leq r(F)$ at equality.
\end{proof}

\begin{Theorem} \label{T:facets5}
$\MAT$ is fulldimensional unless $c=(0,r(E))$ and $E$ is separable.
\end{Theorem}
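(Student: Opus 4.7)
The strategy is to leverage Theorem \ref{T:facets4}(a), which guarantees that $\dim P_{\mathcal{I}}^{(k)}(E) = |E|-1$ whenever $0 < k \leq r(E)$ and either $E$ is inseparable or $k < r(E)$. Since $P_{\mathcal{I}}^{(c_p)}(E) \subseteq \MAT$ for every $p$, and these sub-polytopes lie in distinct parallel hyperplanes $x(E) = c_p$, it suffices to exhibit a single index $p^*$ for which $P_{\mathcal{I}}^{(c_{p^*})}(E)$ has dimension $|E|-1$. Together with any incidence vector $\chi^I \in \MAT$ of cardinality $|I| = c_q$ for some $q \neq p^*$ — available because $c_q \leq r(E)$ always makes $P_{\mathcal{I}}^{(c_q)}(E)$ nonempty — this yields $|E|+1$ affinely independent points in $\MAT$, which is precisely full-dimensionality.

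The argument then reduces to a short case analysis whose sole purpose is to locate such a $p^*$. If $E$ is inseparable, take $p^* = m$, noting $c_m > c_1 \geq 0$ and $c_m \leq r(E)$. Otherwise $E$ is separable, and by hypothesis $c \neq (0, r(E))$; one seeks $p^*$ with $0 < c_{p^*} < r(E)$. If $c_m < r(E)$, take $p^* = m$. If $c_m = r(E)$ but $c_1 > 0$, take $p^* = 1$, using $0 < c_1 < c_m = r(E)$. If $c_m = r(E)$ and $c_1 = 0$, then necessarily $m \geq 3$ (for otherwise $c = (0, r(E))$, the excluded case), and one takes $p^* = m-1$, satisfying $0 < c_2 \leq c_{m-1} < c_m = r(E)$.

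The only real obstacle is the bookkeeping of the case split, in particular verifying that the single exception $c = (0, r(E))$ with $E$ separable is the unique obstruction to producing some $c_p$ lying strictly between $0$ and $r(E)$. Once that verification is in place, Theorem \ref{T:facets4}(a) delivers the $(|E|-1)$-dimensional face inside $\MAT$, and the parallel-hyperplane observation from the first paragraph closes the argument without further work.
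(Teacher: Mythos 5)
Your proof is correct and takes essentially the same approach as the paper: both rest on the observation that $\dim \MAT \geq \dim P_{\mathcal{I}}^{(c_p)}(E)+1$ for every $p$ (the sub-polytopes lie in distinct hyperplanes $x(E)=c_p$) and then invoke Theorem~\ref{T:facets4}(a) to exhibit one cardinality level of dimension $|E|-1$; your case split is organized slightly differently but covers the same ground. The only minor difference is that the paper also records the converse --- when $c=(0,r(E))$ and $E$ is separable, $\dim\MAT = \dim P_{\mathcal{I}}^{(r(E))}(E)+1 < |E|$ --- which the literal \emph{unless} phrasing does not strictly require.
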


\begin{proof}
Clearly, $\dim \MAT \geq \dim P_{\mathcal{I}}^{(c_p)}(E) + 1$ for all $p$, since the equation $x(E) = c_p$ is satisfied by all $y \in P_{\mathcal{I}}^{(c_p)}(E)$ but violated by at least one vector $z \in \MAT$.

If $0 < c_p < r(E)$ for some $p$, then, by Theorem \ref{T:facets4}, $\dim P_{\mathcal{I}}^{(c_p)}(E) = |E|-1$, and consequently $\dim \MAT = |E|$. If there is no such $p$, then $c = (0,r(E))$. Again by Theorem \ref{T:facets4}, $\dim P_{\mathcal{I}}^{(r(E))}(E) = |E|-1$ if and only if $E$ is inseparable. Since $\dim P_{\mathcal{I}}^{(0,r(E))}(E) = \dim P_{\mathcal{I}}^{(r(E))}(E) + 1$, it follows the claim.
\end{proof}

\begin{Theorem} \label{T:facets6}
For any $\emptyset \neq F \subseteq E$, the rank inequality $x(F) \leq r(F)$ defines a facet of $\MAT$ if and only if one of the following conditions holds.
\begin{enumerate}
\item[(i)] $0<r(F)<c_{m-1}$ and $F$ is closed and inseparable.
\item[(ii)] $0<c_{m-1}=r(F)<c_m<r(E)$, and $F$ is closed and inseparable.
\item[(iii)] $0<c_{m-1}=r(F)<c_m=r(E)$, $F$ is closed and inseparable, $\bar{F}$ is $c_m$-inseparable, and $E$ is inseparable.
\item[(iv)] $0<c_{m-1}<c_m=r(F)$, $F=E$, and $c_m<r(E)$ or $E$ inseparable.
\item[(v)] $c_{m-1}=c_1=0$, $c_m=r(E)$, and $r(F)+r(E \setminus F)=r(E)$.
\end{enumerate}
\end{Theorem}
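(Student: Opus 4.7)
The plan is a case analysis on the value of $r(F)$ in relation to $c_{m-1}$, $c_m$, and $r(E)$, using Theorem~\ref{T:facets4} as the main technical tool. The underlying observation is that any tight incidence vector $\chi^I$ satisfies $|I|\geq|I\cap F|=r(F)$, so the tight points lie only in layers $P_{\mathcal{I}}^{(c_p)}(E)$ with $c_p\geq r(F)$. The sufficiency template I would use is to produce $|E|-1$ affinely independent tight vectors inside one layer (via Theorem~\ref{T:facets4}) and then add a single tight vector from another layer to step off the hyperplane $\{x(E)=c_p\}$.

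\textbf{Necessity.} I would dispose of the irrelevant ranges of $r(F)$ first. If $r(F)>c_m$ the face is empty. If $r(F)=c_m$ then every tight $I$ satisfies $I\subseteq F$, so the face sits in $\{x_e=0:e\in\bar F\}$, which has dimension at most $|F|-1$; a facet thus requires $F=E$, giving case~(iv) (whose disjunction $c_m<r(E)$ or $E$ inseparable is exactly the condition of Theorem~\ref{T:facets4}(a) for $P_{\mathcal{I}}^{(c_m)}(E)$ to have dimension $|E|-1$). If $c_{m-1}<r(F)<c_m$, only layer $c_m$ contributes; for a facet the face must coincide with all of $P_{\mathcal{I}}^{(c_m)}(E)$, forcing $x(F)=r(F)$ to be an implicit equation there, which by submodularity gives $r(F)+r(\bar F)=c_m=r(E)$, and then separability of $E$ combined with Theorem~\ref{T:facets5} forces $\MAT$ not to be full-dimensional, hence $c=(0,r(E))$---precisely case~(v). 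In the remaining range $r(F)\leq c_{m-1}$, standard matroid arguments (a non-closed $F$ yields $x(F)\leq r(F)=r(F\cup\{e\})$ for some $e\in\bar F$; a separable $F$ is a sum of smaller rank inequalities) force $F$ closed and inseparable, and the disjunction $c_m<r(E)$ vs.\ $c_m=r(E)$ at $r(F)=c_{m-1}$ separates cases~(ii) and~(iii).

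\textbf{Sufficiency.} For each case I would exhibit $\dim\MAT$ affinely independent tight vectors. For case~(i), apply Theorem~\ref{T:facets4}(b) with $k=c_{m-1}$: clause~(ii) applies because $c_{m-1}<c_m\leq r(E)$, yielding $|E|-1$ affinely independent tight vectors with $x(E)=c_{m-1}$. A further tight vector with $x(E)=c_m$ is built by taking any basis $B$ of $F$ and extending it with $c_m-r(F)$ elements of $\bar F$; this is possible because $F$ closed means $B$ spans $F$, so in the contraction $M/B$ the set $\bar F$ has rank $r(E)-r(F)\geq c_m-r(F)$. For cases~(ii) and~(iii), apply Theorem~\ref{T:facets4}(b) instead to $k=c_m$---clause~(ii) for~(ii), and clause~(i) combined with the inseparability of $E$ for~(iii)---to obtain $|E|-1$ affinely independent tight vectors in layer $c_m$, and add any basis of $F$ (of cardinality $c_{m-1}=r(F)$) as the additional tight vector. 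Case~(iv) is immediate: with $F=E$ the face equals $P_{\mathcal{I}}^{(c_m)}(E)$, of dimension $|E|-1$ by Theorem~\ref{T:facets4}(a) under the stated disjunction. For case~(v), the condition $r(F)+r(\bar F)=r(E)$ makes $x(F)=r(F)$ an implicit equation on $P_{\mathcal{I}}^{(r(E))}(E)$, so the face equals $P_{\mathcal{I}}^{(r(E))}(E)$; since $0\in\MAT$ is not tight (because $r(F)>0$ in a loopless matroid), one has $\dim\MAT=\dim P_{\mathcal{I}}^{(r(E))}(E)+1$ as in the proof of Theorem~\ref{T:facets5}, so the face has codimension one.

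\textbf{Main obstacle.} The delicate step is the construction in case~(i) of an independent extension of $B$ lying entirely in $\bar F$: one needs the extension to stay independent in the full matroid $M$, not just in the restriction $M|_{\bar F}$, and this requires the closedness of $F$ through the contraction argument sketched above. A secondary subtlety is case~(v): because $\MAT$ need not be full-dimensional, several distinct rank inequalities---for instance, $x(G_i)\leq r(G_i)$ for each block $G_i$ of the finest separation of $E$, and also $x(E)\leq r(E)$ itself---can define one and the same facet, and the bookkeeping must accommodate this overlap with case~(iv).
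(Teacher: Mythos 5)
Your skeleton is the paper's: stratify by where $r(F)$ sits relative to $c_{m-1}$, $c_m$ and $r(E)$, obtain $|E|-1$ affinely independent tight points inside one layer $P_{\mathcal{I}}^{(k)}(E)$ via Theorem~\ref{T:facets4}, and add one tight point from a second layer. Your sufficiency constructions (including extending a basis $B$ of $F$ by $c_m-r(F)$ elements of $\bar{F}$, which works simply because $B$ spans $F$ and $c_m\le r(E)$) are sound and correspond to cases (a), (b), (e), (f) of the paper's proof. The genuine gap is the necessity direction in the boundary case $0<c_{m-1}=r(F)<c_m=r(E)$. You dismiss it with ``the disjunction $c_m<r(E)$ vs.\ $c_m=r(E)$ separates cases (ii) and (iii)'', but that sentence does not prove that the extra hypotheses of (iii) --- $\bar{F}$ is $c_m$-inseparable and $E$ is inseparable --- are \emph{necessary}. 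The difficulty is that here the face of $\MAT$ collects tight points from two layers: the bases of $F$ in layer $c_{m-1}$ and the face of $P_{\mathcal{I}}^{(c_m)}(E)$ in layer $c_m$, and one must show that the former cannot supply the affine dimensions the latter is missing. The paper's case (c) does exactly this: it takes an affine basis $\{\chi^{I_1},\dots,\chi^{I_z}\}$ of the layer-$c_m$ face, puts $J:=I_1\cap F$, and shows that every basis $L$ of $F$ satisfies $\chi^L=\sum_j\lambda_j\chi^{I_j}-\chi^{I_1}+\chi^J$, so the entire face lies in the affine hull of $z+1$ points. Nothing in your write-up replaces this computation.

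The omission is not cosmetic, because the skipped step is exactly where the statement is fragile. Take the graphic matroid of two disjoint triangles $T_1\,\dot{\cup}\,T_2$ (so $|E|=6$, $r(E)=4$), $c=(2,4)$, $F=T_1$. Then $r(F)=2=c_1$, $c_2=4=r(E)$, $F$ is closed and inseparable, $\bar{F}$ is $4$-inseparable, but $E$ is separable, so condition (iii) fails. Nevertheless every independent $4$-set meets each triangle in exactly two edges, so the twelve tight points (three $2$-subsets of $T_1$ and nine $4$-sets) affinely span the hyperplane $\{x\in\mathbb{R}^E : x(T_1)=2\}$, and $x(F)\le r(F)$ does define a facet of the full-dimensional polytope $\MAT$. (This is precisely the sub-case in which the paper's own bound $z\le|E|-2$ breaks down, namely when $x(F)\le r(F)$ is an implicit equation for $P_{\mathcal{I}}^{(c_m)}(E)$ and $\dim P_{\mathcal{I}}^{(c_m)}(E)=|E|-2$.) So any honest treatment of case (iii) must carry out the two-layer dimension count and, in particular, confront the implicit-equation sub-case; it cannot be waved through. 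A minor further slip: when $r(F)=c_m$ and $c_{m-1}=c_1=0$, your reduction points to case (iv), which requires $c_{m-1}>0$; that instance belongs to case (v).
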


\begin{proof}
We prove the theorem by case by case enumeration.

(a) Let $0<r(F)<c_{m-1}$. It is not hard to see that if $F$ is separable or not closed, then $x(F) \leq r(F)$ does not define a facet of $\MAT$. So, let $F$ be closed and inseparable. By Theorem \ref{T:facets4}, $x(F) \leq r(F)$ defines a facet of $P_{\mathcal{I}}^{(c_{m-1})}(E)$ and $\dim P_{\mathcal{I}}^{(c_{m-1})}(E)= |E|-1$. Thus, it defines also a facet of $\MAT$.

(b) Let $0<c_{m-1}=r(F)<c_m<r(E)$. Clear by interchanging $c_{m-1}$ and $c_m$ in item (a).

(c) Let $0<c_{m-1}=r(F)<c_m=r(E)$. The conditions mentioned in (iii) are equivalent to the postulation that $x(F) \leq r(F)$ defines a facet of $P_{\mathcal{I}}^{(c_m)}(E)$ and $\dim P_{\mathcal{I}}^{(c_m)}(E)=|E|-1$. If, indeed, the latter is true, then $x(F) \leq r(F)$ induces a facet also of $\MAT$. To show the converse, suppose, for the sake of contradiction, that $x(F) \leq r(F)$ does not induce a facet of $P_{\mathcal{I}}^{(c_m)}(E)$ or $\dim P_{\mathcal{I}}^{(c_m)}(E)<|E|-1$. Let $\mathcal{B} := \{\chi^{I_j} : I_j \in \mathcal{I}, |I_j|=c_m, j=1,\ldots, z,\}$ be an affine basis of the face of $P_{\mathcal{I}}^{(c_m)}(E)$ induced by $x(F) \leq r(F)$. By hypothesis, $z \leq |E|-2$. Moreover, set $J := I_1 \cap F$ and $K:= I_1 \setminus J$. Then, any incidence vector of an independent set $L \subseteq F$ with $|L|=c_{m-1}$ can be obtained as an affine combination of the set $\mathcal{B}' := \mathcal{B} \cup \{\chi^J\}$, which can be seen as follows: $L,I_1 \in \mathcal{I}$, and $|L|=r(F)$ implies $L \cup {K} \in \mathcal{I}$. Consequently, $\chi^L = \chi^{L \cup K} - \chi^K$. Now, $\chi^K = \chi^{I_1} - \chi^J$ and $\chi^{L \cup K}= \sum_{j=1}^z \lambda_j \chi^{I_j}$ with $\sum_{j=1}^z \lambda_j =1$, since $L \cup K$ is tight. Thus, $\chi^L= \sum_{j=1}^z \lambda_j \chi^{I_j}-\chi^{I_1} + \chi^J$, that is, $\chi^L$ is in the affine hull of $\mathcal{B}'$. Since $|\mathcal{B}'| \leq |E|-1$, $x(F)  \leq r(F)$ is not facet defining for $\MAT$, a contradiction.

(d) Let $0<c_{m-1} < r(F)<c_m$. Since none of the independent sets $I$ with $|I|=c_p$ is tight for $p=1,\ldots,m-1$, $x(F) \leq r(F)$ defines a facet of $\MAT$ if and only if it is an implicit equation for $P_{\mathcal{I}}^{(c_m)}(E)$ and $\dim P_{\mathcal{I}}^{(c_m)}(E)=|E|-1$. However, $\dim P_{\mathcal{I}}^{(c_m)}(E)=|E|-1$ implies $c_m<r(E)$ or $E$ is inseparable. In either case, it follows that $x(F) \leq r(F)$ is an implicit equation for $P_{\mathcal{I}}^{(c_m)}(E)$ if and only if $F=E$. Thus, $r(F)=c_m$, a contradiction.

(e) Let $0<c_{m-1}<c_m=r(F)$. Clearly, if $F \subset E$, then $x(F) \leq r(F)$ is strictly dominated by the cardinality bound $x(E) \leq c_m$. Consequently, $F=E$ and $x(F) \leq r(F)$ is an implicit equation for $P_{\mathcal{I}}^{(c_m)}(E)$. For the same reasons as in (d), $\dim P_{\mathcal{I}}^{(c_m)}(E)=|E|-1$. Hence, $c_m<r(E)$ or $E$ is inseparable.

(f) Let $c_{m-1}=c_1=0$. Again, $x(F) \leq r(F)$ defines a facet of $\MAT$ if and only if it is an implicit equation for $P_{\mathcal{I}}^{(c_m)}(E)$. This is the case if and only if $c_m=r(E)$ and $r(F)+r(E \setminus F)=r(E)$.

(g) Let $r(F)>c_m$. Then, $x(F) \leq x(E)\leq c_m<r(F)$, that is, the face induced by $x(F) \leq r(F)$ is the empty set.
\end{proof}

\begin{Theorem} \label{T:facets7}
Let $F \subseteq E$ with $c_p< r(F) < c_{p+1}$ for some $p \in
\{1,\dots,m-1\}$. Then, the
rank induced forbidden set inequality $\FS_F(x) 
\leq c_p (c_{p+1}-r(F))$ defines a facet of $\MAT$ if
and only if
\begin{enumerate}
\item[(a)] $c_p=c_1=0$ and the inequality $x(F) \leq r(F)$ defines a facet of $P_{\mathcal{I}}^{(c_{p+1})}(E)$, or
\item[(b)] $c_p>0$, $F$ is closed and (i) $\bar{F} := E \setminus F$ is $c_{p+1}$-inseparable or (ii) $c_{p+1}<r(E)$.
\end{enumerate}

\end{Theorem}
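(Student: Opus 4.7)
The plan is to compute the affine dimension of the face $\mathcal{F}$ of $\MAT$ induced by $\FS_F(x) \le c_p(c_{p+1}-r(F))$ and to verify it equals $|E|-1 = \dim\MAT - 1$ precisely under conditions (a) or (b). Revisiting the validity argument following Theorem 1.1, an incidence vector $\chi^I$ of $I \in \mathcal{I} \cap \CHS^c(E)$ is tight exactly when $I$ belongs to one of two families: $T_1 := \{I \in \mathcal{I} : I \subseteq F,\; |I|=c_p\}$ (Type 1) or $T_2 := \{I \in \mathcal{I} : |I|=c_{p+1},\; |I \cap F|=r(F)\}$ (Type 2). For $|I| = c_q$ with $q \notin \{p,p+1\}$ one checks directly that the inequality is strict.

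Case (a) is handled as follows. When $c_p=0$, $T_1=\{\emptyset\}$ lies on $x(E)=0$ while all of $T_2$ lies on $x(E)=c_{p+1}$, so $\dim\mathcal{F} = \dim\mbox{conv}(T_2) + 1$. But $\mbox{conv}(T_2)$ is exactly the face of the truncated matroid polytope $P_{\mathcal{I}}^{(c_{p+1})}(E)$ induced by $x(F) \le r(F)$; this has dimension $|E|-2$ precisely when that rank inequality is facet defining for $P_{\mathcal{I}}^{(c_{p+1})}(E)$, yielding (a).

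For case (b), $c_p > 0$, I first argue $F$ must be closed. If some $e \in \bar F$ satisfies $r(F \cup \{e\}) = r(F)$, a direct rearrangement gives $\FS_F(x) = \FS_{F \cup \{e\}}(x) - (c_{p+1}-c_p)x_e$ with identical right hand side, so validity of $\FS_{F\cup \{e\}}$ forces $x_e=0$ on every tight point of $\FS_F$. Since $c_p\ge 1$ allows a Type 1 tight set of $F\cup \{e\}$ to contain $e$, intersecting the face for $F\cup \{e\}$ with $\{x_e=0\}$ strictly reduces dimension, giving $\dim\mathcal{F} \le |E|-2$. For sufficiency I build $|E|$ affinely independent tight vectors. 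Every $I \in T_2$ decomposes as $I = B \cup K$ with $B$ a basis of $F$ and $K \subseteq \bar F$ independent in the contraction $M/F$ of size $k' := c_{p+1}-r(F)$, and every such pair yields a member of $T_2$; hence $T_2' := \{I \cap \bar F : I \in T_2\}$ is the set of bases of the $k'$-truncation of $M/F$. Fix $I_0 \in T_1$ and $J_0 \in T_2$. The vectors $\chi^I - \chi^{I_0}$ for $I \in T_1$ lie in $V := \{v \in \mathbb{R}^E : v_{\bar F}=0,\; v(F)=0\}$ and span all of it by Theorem 2.4(a) applied to $M|F$ with $0<c_p<r(F)$; the vectors $\chi^J - \chi^{J_0}$ for $J \in T_2$ lie in $W := \{v \in \mathbb{R}^E : v(E)=0,\; v(F)=0\} \supset V$, and their projections onto $W/V \cong \{y \in \mathbb{R}^{\bar F} : y(\bar F)=0\}$ span this quotient precisely when the cardinality constrained polytope of the $k'$-truncation of $M/F$ has dimension $|\bar F|-1$. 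Finally $u := \chi^{J_0} - \chi^{I_0}$ satisfies $u(E)=c_{p+1}-c_p \neq 0$ and is hence transversal to $W$, so the total span has dimension $|E|-1$ iff that contracted polytope is full-dimensional in its slice.

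It remains to translate the condition on $M/F$ into one on $M$. By Theorem 2.4(a) applied to $M/F$, full-dimensionality of the contracted polytope is equivalent to $\bar F$ being inseparable in $M/F$ or $k' < r_{M/F}(\bar F)$. Using the contraction identity $r(F \cup F_0) = r(F) + r_{M/F}(F_0)$ for $F_0 \subseteq \bar F$, one verifies that for every $F' \subseteq \bar F$, $r^{c_{p+1}}(F') = c_{p+1} - r(E \setminus F') = k' - r_{M/F}(\bar F \setminus F') = r^{k'}_{M/F}(F')$. Consequently $c_{p+1}$-inseparability of $\bar F$ in $M$ is the same as $k'$-inseparability in $M/F$, which under $k' = r_{M/F}(\bar F)$ (i.e.\ $c_{p+1}=r(E)$) further coincides with plain inseparability of $\bar F$ in $M/F$; and $k' < r_{M/F}(\bar F)$ is equivalent to $c_{p+1} < r(E)$. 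This is exactly (i) or (ii). Necessity of $F$ closed and of (i) or (ii) follows by reversing the dimension count. The main technical obstacle is executing the subspace decomposition $V \subset W$ and the projection argument onto $W/V$ cleanly, so the problem reduces to one application of Theorem 2.4(a) to the contraction $M/F$; the rank translation above is what makes that reduction go through.
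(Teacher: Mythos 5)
Your proposal is correct in substance but takes a genuinely different route from the paper in the main case. The paper proves sufficiency in case (b) by assembling a block--triangular $|E|\times|E|$ nonsingular submatrix of tight incidence vectors: the $F$-block comes from Lemma \ref{L:facets1} applied to the $c_p$-truncation of the restriction $M^F$, and the $\bar F$-block from the technical Lemma \ref{L:facets3} (whose proof is the intricate basis-exchange argument). You instead observe that the traces on $\bar F$ of the Type~2 tight sets are exactly the bases of the $(c_{p+1}-r(F))$-truncation of the contraction $M/F$, and reduce the whole $\bar F$-side to the dimension formula of Theorem \ref{T:facets4}(a) applied to $M/F$, glued to the $F$-side via the flag $V\subset W\subset\mathbb{R}^E$; the rank identity $r^{c_{p+1}}(F')=r^{k'}_{M/F}(F')$ you verify is what makes conditions (i)/(ii) come out the same. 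This bypasses Lemma \ref{L:facets3} entirely for this theorem and is arguably cleaner; what the paper's route buys is that Lemma \ref{L:facets3} is stated once and reused in Theorem \ref{T:facets4}(b) as well. The necessity arguments coincide in spirit (decomposition into valid inequalities when $F$ is not closed; dimension deficiency otherwise), though the paper makes the ``neither (i) nor (ii)'' case explicit as a sum of two rank induced forbidden set inequalities, whereas you defer it to ``reversing the dimension count'' --- that does work, but only after noting that the total span meets $\mathbb{R}^F$ inside $V$ (because $\pi(u)(\bar F)=k'\neq 0$), so the count is genuinely an equality; you should spell that out. Three smaller points: (1) in case (a) your phrase ``dimension $|E|-2$ precisely when facet defining for $P_{\mathcal{I}}^{(c_{p+1})}(E)$'' silently assumes $\dim P_{\mathcal{I}}^{(c_{p+1})}(E)=|E|-1$, which fails exactly when $c=(0,r(E))$ and $E$ is separable; the equivalence ``facet of $\MAT$ iff facet of $P_{\mathcal{I}}^{(c_{p+1})}(E)$'' survives, but the explicit dimension does not. (2) Applying Theorem \ref{T:facets4}(a) to $M/F$ presupposes $M/F$ is loopless, which is precisely the closedness of $F$; this is where closedness enters your sufficiency argument and deserves to be said. (3) Your citation ``Theorem 2.4(a)'' should read Theorem \ref{T:facets4}(a); Lemma \ref{L:facets3} carries the number 2.4 and has no part (a).
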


\begin{proof}
For $P_{\mathcal{I}}^{(c_{p+1})}(E)$, the inequality $\FS_F(x) \leq c_p (c_{p+1}-r(F))$ is equivalent to $x(F) \leq r(F)$, while for $P_{\mathcal{I}}^{(c_{p})}(E)$, it is equivalent to $x(F) \leq c_p$. Hence,
in case $c_p=c_1=0$, $\FS_F(x) \leq c_p (c_{p+1}-r(F))$ induces a facet of $\MAT$ if and only if it induces a facet of $P_{\mathcal{I}}^{(c_{p+1})}(E)$. When $\dim P_{\mathcal{I}}^{(c_{p+1})}(E) = |E|-1$, this is the case if and only if $F$ is closed and inseparable and (i) $\bar{F}$ is $c_{p+1}$-inseparable or (ii) $c_{p+1} < r(E)$, see Theorem \ref{T:facets4} (b).

In the following, let $c_p>0$. Let $A$ be the matrix whose rows are the incidence vectors of $I \in \mathcal{I}$ with $|I|=c_p$ or $|I|=c_{p+1}$ that satisfy the inequality $\FS_F(x) \leq c_p (c_{p+1}-r(F))$ at equality. Denote by $A_F$ and $A_{\bar{F}}$ the restriction of $A$ to $F$ and $\bar{F}$, respectively. By Theorem \ref{T:facets5}, $\MAT$ is fulldimensional. Hence, $\FS_F(x) \leq c_p (c_{p+1}-r(F))$ is facet defining if and only if the affine rank of $A$ is equal to $|E|$. 

If $F$ is not closed, then there is some $e \in \bar{F}$ with $r(F \cup \{e\}) = r(F)$. Thus, $\FS_{F'}(x) \leq c_p (c_{p+1}-r(F'))$ is a valid inequality for $\MAT$, where $F' := F \cup \{e\}$, and $\FS_F(x) \leq c_p (c_{p+1}-r(F))$ is the sum of this inequality and $- (c_{p+1} - c_p) x_e \leq 0$. Next, assume that neither (i) nor (ii) holds. Then, $c_{p+1}=r(E)$ and $\bar{F}$ is $r(E)$-separable. Thus, there is a proper partition $\bar{F} = \bar{F}_1 \dot{\cup} \bar{F}_2$ of $\bar{F}$ with $r^{r(E)}(\bar{F}_1) + r^{r(E)}(\bar{F}_2) = r^{r(E)}(\bar{F})$. Since $F$ is closed, it is not hard to see that $r^{r(E)}(\bar{F}_i) >0$ which implies $c_p < r(F \cup \bar{F}_i) < r(E)$ for $i=1,2$, and hence, the inequalities $\FS_{F \cup \bar{F}_1}(x) \leq c_p (c_{p+1}-r(F \cup \bar{F}_1))$ and $\FS_{F \cup \bar{F}_2}(x) \leq c_p (c_{p+1}-r(F \cup \bar{F}_2))$ are valid. One can check again that then $\FS_F(x) \leq c_p (c_{p+1}-r(F))$ is the sum of these both rank induced forbidden set inequalities.

To show the converse, let
$M^F = (F, \mathcal{I}^F)$ with $\mathcal{I}^F := \{I \cap F : I \in \mathcal{I}\}$ be the restriction of $M$ to $F$ and $M^F_{c_p} = (F,\mathcal{I}^F_{c_p})$ the $c_p$-truncation of $M^F$. Since $0 < c_p < r(F)$, Lemma \ref{L:facets1} implies that $F$ is inseparable with respect to the rank function of $M^F_{c_p}$. Consequently, the restriction of $x(F) \leq c_p$ to $F$ defines a facet of $P_{\mathcal{I}^F_{c_p}}(F)$. Hence, $A$ contains a $|F| \times |E|$ submatrix $B$ such that $B_F$ is nonsingular and $B_{\bar{F}} = 0$. Next, since $F$ is closed, $r^{c_{p+1}}(\bar{F}) \geq 1$, and (i) $\bar{F}$ is $c_{p+1}$-inseparable or (ii) $c_{p+1}<r(E)$, Lemma \ref{L:facets3} implies that $A$ contains a $|\bar{F}| \times |E|$ submatrix $C$ such that $C_{\bar{F}}$ is nonsingular. Thus,
\[D :=
\begin{pmatrix}
B_F & 0 \\
C_F & C_{\bar{F}}
\end{pmatrix} 
\]
is a nonsingular $|E| \times |E|$ submatrix of $A$ (or a row permutation of $A$).
\end{proof}

\subsection{Separation problem}
Given any $\MAT$ and any $x^* \in \mathbb{R}^E$, the separation
problem consists of finding an inequality among \eqref{eq_FS}-\eqref{eq_nn}
violated by $x^*$ if there is any. This problem should be solvable
efficiently, due to the polynomial time equivalence of optimization and separation
(see Gr\"otschel, Lov\'{a}sz, and Schrijver~\cite{GLS}). By default,
we may assume that $x^*$ satisfies the cardinality bounds
\eqref{eq_lowerBound}, \eqref{eq_upperBound} and the nonnegativity
constraints \eqref{eq_nn}. A violated rank inequality among
\eqref{eq_rankInequalities} (if there is any) can
be found by a polynomial time algorithm proposed by
Cunningham~\cite{Cunningham}. So, we are actually interested only in
finding an efficient algorithm that solves the separation problem for
the class of rank induced forbidden set inequalities \eqref{eq_FS}.
If $\rank(F)=|F|$ for all $F \subseteq E$, then the separation routine proposed by Gr\"otschel~\cite{Groetschel}
can be applied: For each forbidden cardinality $k$ one just needs to take the first $k$ greatest weights, say  $x^*_{e_1},\ldots, x^*_{e_k},$ and check whether the forbidden set inequality associated with $F:= \{e_1,\ldots,e_k\}$ is violated by $x^*$.
Otherwise we shall see
that the separation problem for the rank induced forbidden set inequalities
can be transformed to that for the rank inequalities. 

The separation problem for the class of rank induced forbidden set
inequalities consists of checking whether or not
\begin{equation*}
\begin{array}{rcll}
\multicolumn{4}{l}{
(c_{p+1}-\rank(F)) x^*(F)-
(\rank(F)-c_p)x^*(E \setminus F) \; \leq \;
c_p(c_{p+1}-\rank(F))}\\ 
\multicolumn{4}{r}{\mbox{for all $F \subseteq E$ with $c_p <
    \rank(F) < 
  c_{p+1}$} \mbox{ for some $p \in \{0,\dots,m-1\}$}.}
\end{array}
\end{equation*}
For any $F \subseteq E$,
\begin{equation*}
\begin{array}{cl}
&(c_{p+1}-\rank(F)) x^*(F)-(\rank(F)-c_p)x^*(E \setminus F) \leq 
c_p(c_{p+1}-\rank(F))\\[0.2cm]
\Leftrightarrow & (c_{p+1}-c_p) x^*(F)-(\rank(F)-c_p)x^*(E) \leq 
c_p(c_{p+1}-\rank(F))\\[0.2cm]
\Leftrightarrow & x^*(F) \leq \frac{
c_p(c_{p+1}-\rank(F))+(\rank(F)-c_p)x^*(E)}{ (c_{p+1}-c_p)} =: \gamma_F. 
\end{array}
\end{equation*}
Moreover, for any
$k \in \{1,\dots,\rank(E)\}$, the right hand sides of the
inequalities $x^*(F) \leq \gamma_F$ for $F \subseteq E$ with
$\rank(F)=k$ are equal and differ only by a constant to the right hand
sides of the corresponding rank inequalities $x(F) \leq \rank(F)=k$.
Thus, both the 
separation problem for the rank inequalities and rank induced forbidden set
inequalities could be solved by finding, for each $k \in
\{1,\dots,|E|\}$, a set $F^* \subseteq E$ of rank $k$ that maximizes
$x^*(F)$. If $x^*(F^*)>k$, then the inequality $x(F^*)\leq \rank(F^*)$
is violated by $x^*$. If, in addition, $c_p<k<c_{p+1}$ for some $p \in
\{1,\dots,m-1\}$ and $x^*(F^*)> \gamma_{F^*}$, then $x^*$ violates the
rank induced forbidden set inequality associated with $F^*$.

This natural generalization of Gr\"otschel's separation algorithm, however, seems usually not to result in an efficient separation routine. In order to mark the difficulties, we investigate
the above approach for the class of rank inequalities, when
$M=(E,\mathcal{I})$ is the graphic matroid defined on some graph
$G=(V,E)$. It is well known that the closed and inseparable rank
inequalities for the graphic matroid are of the form $x(E(W)) \leq
|W|-1$ for $\emptyset \neq W \subseteq V$. 
If we would tackle the
separation problem for this class of inequalities by finding, for each
$k \in \{1,\dots, |W|\}$ separately, a set $W^*_k$ that maximizes
$x^*(E(W))$ such that $|W|=k$, then we would run into trouble, since
for each $k$, such 
a problem is the weighted version of the densest $k$-subgraph problem
which is known to be NP-hard (see Feige and
Seltser~\cite{FS}). 

The last line of argument indicates that it is probably not a good
idea to split the separation problem for the rank induced forbidden set inequalities
\eqref{eq_FS} into separation problems for the
subclasses $\FS_F(x) \leq c_p(c_{p+1}-\rank(F))$ with $\rank(F)=k, \, k
\in \{c_1+1,\dots, c_m-1\} \setminus \{c_2,c_3,\dots,c_{m-1}\}$. It
would be rather better to approach it as ``non-cardinality
constrained'' problem. And this is exactly what Cunningham did
for the rank inequalities. 

In the sequel, we firstly remind of some important facts regarding
Cunningham's algorithm for the separation of the rank
inequalities. Afterwards, we show how the separation 
problem for the rank induced forbidden set inequalities can be reduced to
that for the rank inequalities.

The theoretical background of Cunningham's separation routine is the
following min-max relation.
\begin{Theorem}[Edmonds~\cite{Edmonds1970}] \label{Tmm}
For any $x^* \in \mathbb{R}^E_+$, $\max \{y(E) : y \in P_M(E), y \leq x^*\}=
\min \{\rank(F)+x^*(E \setminus F) : F \subseteq E\}$. \hfill $\Box$
\end{Theorem}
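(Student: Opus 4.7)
The plan is to prove the identity by LP duality applied to the linear program
\[\max\bigl\{y(E) \;:\; y(F) \leq r(F) \text{ for all } \emptyset \neq F \subseteq E,\; 0 \leq y \leq x^*\bigr\}.\]
The ``$\leq$'' direction is immediate weak duality: for any feasible $y$ and any $F \subseteq E$,
\[y(E) = y(F) + y(E \setminus F) \leq r(F) + x^*(E \setminus F),\]
using $y(F) \leq r(F)$ and $y_e \leq x^*_e$ on $E \setminus F$. Minimizing the right-hand side over $F$ then yields the upper bound on the LP's value.

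For the reverse direction, I would take the LP dual, namely
\[\min\Bigl\{\sum_{F \subseteq E} r(F) z_F + \sum_{e \in E} x^*_e w_e \;:\; \sum_{F \ni e} z_F + w_e \geq 1 \text{ for all } e \in E,\; z,w \geq 0\Bigr\},\]
and exhibit an optimal dual of the special form $z_{F^*} = 1$, $z_F = 0$ otherwise, and $w_e = 1$ precisely for $e \in E \setminus F^*$, whose value is $r(F^*) + x^*(E \setminus F^*)$. The standard route is an uncrossing argument: among optimal dual solutions, pick one whose support $\mathcal{F} = \{F : z_F > 0\}$ minimizes $\sum_{F \in \mathcal{F}} |F|(|E| - |F|)$. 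Submodularity of $r$ ensures that if two sets $F_1, F_2 \in \mathcal{F}$ properly cross, then shifting a small $\varepsilon$ of weight from $\{F_1, F_2\}$ onto $\{F_1 \cap F_2,\, F_1 \cup F_2\}$ preserves dual feasibility and does not increase the objective, contradicting minimality of the uncrossing potential. Hence $\mathcal{F}$ is a chain $F_1 \subsetneq \cdots \subsetneq F_k$, and a further collapse step reduces the chain to a single set $F^*$ realizing $\min_i \{r(F_i) + x^*(E \setminus F_i)\}$. Strong LP duality then supplies the matching lower bound.

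The main obstacle is the final chain-to-single-set collapse: one must argue that the dual value of a chain-supported feasible solution is bounded below by some individual $r(F_i) + x^*(E \setminus F_i)$ along the chain. A cleaner alternative that bypasses the collapse is to run Edmonds' greedy algorithm against the constrained problem: sort elements so that $x^*_{e_1} \geq x^*_{e_2} \geq \cdots$, and greedily raise each $y_{e_i}$ up to $x^*_{e_i}$ while respecting the rank constraints. The greedy simultaneously produces a primal optimum $y^*$ and, by reading off the tight level set at which $y$ first becomes ``rank-saturated'', a set $F^*$ certifying $y^*(E) = r(F^*) + x^*(E \setminus F^*)$. Either route ultimately rests on submodularity of $r$ together with strong LP duality.
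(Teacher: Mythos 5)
The paper does not prove this statement: it is quoted from Edmonds with a terminal $\Box$, and the only argument the authors supply is the one-line remark immediately after the theorem, which is exactly your weak-duality direction ($y(E)=y(F)+y(E\setminus F)\le r(F)+x^*(E\setminus F)$, with the observation about when equality holds). So for the ``$\le$'' half you coincide with the paper; for the ``$\ge$'' half there is nothing in the paper to compare against, and your proposal must stand on its own.

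As written, it does not quite stand on its own: the chain-to-single-set collapse, which you correctly identify as the main obstacle, is the entire content of the hard direction, and you only assert it. It is, however, fillable with a short computation, so the gap is one of execution rather than of concept. Concretely: once the support of the dual is a chain $F_1\subsetneq\cdots\subsetneq F_k$ with weights $z_i$, set $t_i:=z_i+\cdots+z_k$; one may cap each $t_i$ at $1$ without losing feasibility or increasing the objective (the coefficient of $t_i$ in the $z$-part is $r(F_i)-r(F_{i-1})\ge 0$, and $w_e=\max(0,1-t_{i(e)})$ is unaffected). Abel summation then gives
\begin{equation*}
\sum_i r(F_i)z_i+\sum_e x^*_e w_e \;=\; x^*(E)+\sum_{i=1}^k \lambda_i\bigl(r(F_i)-x^*(F_i)\bigr),\qquad \lambda_i=t_i-t_{i+1}\ge 0,\ \textstyle\sum_i\lambda_i\le 1,
\end{equation*}
which is at least $\min\{r(F)+x^*(E\setminus F): F\in\{\emptyset,F_1,\dots,F_k\}\}$, completing the argument. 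Your greedy alternative is also viable but hides the same difficulty in a different place: the claim that the greedy value equals $\min_F\{r(F)+x^*(E\setminus F)\}$ amounts to showing that $P_M(E)\cap[0,x^*]$ is the polymatroid of the convolution $r\mathbin{\square}x^*$, which is essentially the theorem itself and would need to be proved rather than invoked.
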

Indeed, for any $y \in P_M(E)$ with $y \leq x^*$, $y(E)=y(F)+y(E \setminus
F) \leq \rank(F)+x^*(E \setminus F)$, and equality will be
attained if only if $y(F)=\rank(F)$ and $y(E \setminus F)= x^*(E \setminus
F)$. Theorem \ref{Tmm} guarantees that any $F$ minimizing
$\rank(F)+x^*(E \setminus F)$ maximizes $x^*(F)-\rank(F)$. For any 
matroid $M=(E,\mathcal{I})$ given by an independence testing oracle
and any $x^* \in \mathbb{R}^E_+$, Cunningham's algorithm finds a $y
\in P_M(E)$ with $y \leq x^*$ maximizing $y(E)$, a decomposition of
$y$ as convex combination of incidence vectors of independent sets,
and a set $F^* \subseteq E$ with $\rank(F^*)+x^*(E \setminus
F^*)=y(E)$ in strongly polynomial time. The vector $y$ will be
constructed by path
augmentations along shortest paths in an auxiliary digraph.

Next, we return to the separation problem for the rank induced forbidden set
inequalities \eqref{eq_FS}. In the sequel, we suppose that $x^*$
satisfies the rank inequalities \eqref{eq_rankInequalities}. 
\begin{Lemma} \label{L_SEP1}
Let $x^* \in \mathbb{R}^E_+$ satisfying all rank inequalities
\eqref{eq_rankInequalities}.   
If a rank induced forbidden set inequality $\FS_F(x) \leq
c_p(c_{p+1}-\rank(F))$ with $c_p<\rank(F)<c_{p+1}$ is violated by
$x^*$, then $c_p < x^*(E) < c_{p+1}$. 
\end{Lemma}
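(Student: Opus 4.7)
The plan is to argue by contrapositive (equivalently, by contradiction): assume $x^*$ violates the rank induced forbidden set inequality associated with $F$, and show that neither $x^*(E) \leq c_p$ nor $x^*(E) \geq c_{p+1}$ can occur. Abbreviate $a := x^*(F)$ and $b := x^*(E \setminus F)$, so that $x^*(E)=a+b$, and recall that $a,b \geq 0$, $a \leq r(F)$ (by the rank inequality on $F$), and the violation reads
\[(c_{p+1}-r(F))\,a \;-\;(r(F)-c_p)\,b \;>\; c_p(c_{p+1}-r(F)).\]
Note $c_{p+1}-r(F)>0$ and $r(F)-c_p>0$ by hypothesis on $F$.

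First I would rule out the case $x^*(E) \leq c_p$. Since $b \geq 0$, this forces $a \leq c_p$, and then the left-hand side above satisfies
\[(c_{p+1}-r(F))\,a - (r(F)-c_p)\,b \;\leq\; (c_{p+1}-r(F))\,c_p,\]
contradicting the assumed strict violation. This case is entirely routine.

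Next I would rule out the case $x^*(E) \geq c_{p+1}$. Here the substitution $b \geq c_{p+1}-a$ gives
\[(c_{p+1}-r(F))\,a - (r(F)-c_p)\,b \;\leq\; (c_{p+1}-c_p)\,a - (r(F)-c_p)\,c_{p+1},\]
and invoking the rank inequality $a \leq r(F)$ on the remaining positive coefficient $(c_{p+1}-c_p)$ bounds the right-hand side by $(c_{p+1}-c_p)\,r(F) - (r(F)-c_p)\,c_{p+1} = c_p(c_{p+1}-r(F))$, again contradicting violation. The only delicate point is making sure the sign of each coefficient is correct before substituting either the cardinality bound or the rank bound, so that each inequality goes in the right direction; this is the main (and only) obstacle, and it is purely computational.

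Combining the two cases, the strict inequalities $c_p < x^*(E) < c_{p+1}$ follow, completing the proof.
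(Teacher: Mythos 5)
Your proof is correct and follows essentially the same route as the paper: both cases are handled by the same three bounds ($b\ge 0$ together with $x^*(E)\le c_p$ for the first case; the rank inequality $x^*(F)\le r(F)$ together with $x^*(E)\ge c_{p+1}$ for the second), with only the order of substitutions in the second case differing from the paper's computation. No gaps.
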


\begin{proof}
First, assume that $x^*(E) \leq c_p$. Then $x^*(F) \leq c_p$, and hence,
\[
\begin{array}{cl}
& (c_{p+1}-\rank(F)) x^*(F) - (\rank(F)-c_p)x^*(E \setminus F)\\
 \leq &  (c_{p+1}-\rank(F)) c_p - (\rank(F)-c_p)x^*(E \setminus F)\\
 \leq &  c_p(c_{p+1}-\rank(F)).
\end{array}
\]

Next, assume that $x^*(E) \geq c_{p+1}$.
By hypothesis, $x^*$ satisfies all rank inequalities
\eqref{eq_rankInequalities}, in particular, $x(F) \leq \rank(F)$. Thus,
\[
\begin{array}{cl}
&(c_{p+1}-\rank(F)) x^*(F) - (\rank(F)-c_p)x^*(E \setminus F)\\
 = &  (c_{p+1}-c_p) x^*(F) - (\rank(F)-c_p)x^*(E)\\
 \leq &  (c_{p+1}-c_p) \rank(F) - (\rank(F)-c_p)x^*(E)\\
 \leq &  (c_{p+1}-c_p) \rank(F) - (\rank(F)-c_p)c_{p+1}\\
 = & c_p(c_{p+1}-\rank(F)).
\end{array}
\]
\end{proof}

\begin{Lemma} \label{L_SEP2}
Let $x^* \in \mathbb{R}^E_+$ satisfying all rank inequalities
\eqref{eq_rankInequalities}, and 
let $c_p<x^*(E)<c_{p+1}$ for some $p \in \{1,\dots,m-1\}$. Then for
any $F \subseteq E$ we have: If $(c_{p+1}-\rank(F)) x^*(F) -
(\rank(F)-c_p)x^*(E \setminus F) > c_p(c_{p+1}-\rank(F))$, then
$c_p<\rank(F)<c_{p+1}$.   
\end{Lemma}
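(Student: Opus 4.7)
The plan is to prove the contrapositive: assuming $r(F) \notin (c_p,c_{p+1})$, i.e.\ either $r(F) \leq c_p$ or $r(F) \geq c_{p+1}$, I show that the rank induced forbidden set inequality associated with $F$ is not strictly violated at $x^*$. The structural observation that drives the argument is the identity
\[
(c_{p+1}-r(F))\,x^*(F) - (r(F)-c_p)\,x^*(E \setminus F) = (c_{p+1}-c_p)\,x^*(F) - (r(F)-c_p)\,x^*(E),
\]
obtained by writing $x^*(E \setminus F) = x^*(E) - x^*(F)$ and regrouping. This exhibits the LHS as affine in $x^*(E)$ and in $x^*(F)$ separately, so the two hypotheses we can use ($x^*(F) \leq r(F)$ and the bounds on $x^*(E)$) can be plugged in one at a time. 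One should notice the symmetry with Lemma~\ref{L_SEP1}: that lemma fixed $F$ with $c_p < r(F) < c_{p+1}$ and ruled out $x^*(E)$ outside $(c_p,c_{p+1})$; here we fix $x^*(E)$ inside $(c_p,c_{p+1})$ and rule out $r(F)$ outside.

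For the case $r(F) \leq c_p$, the coefficient $(c_{p+1}-c_p)$ is positive, so the rank inequality $x^*(F) \leq r(F)$ yields LHS $\leq (c_{p+1}-c_p)\,r(F) - (r(F)-c_p)\,x^*(E) = (c_{p+1}-c_p)\,r(F) + (c_p-r(F))\,x^*(E)$. If $r(F) < c_p$, the coefficient $(c_p - r(F))$ is strictly positive, and the hypothesis $x^*(E) < c_{p+1}$ lets me replace $x^*(E)$ by $c_{p+1}$, giving LHS $< (c_{p+1}-c_p)\,r(F) + (c_p-r(F))\,c_{p+1} = c_p(c_{p+1}-r(F))$. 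The boundary subcase $r(F)=c_p$ is immediate since the second summand vanishes.

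For the case $r(F) \geq c_{p+1}$, I instead use the trivial bound $x^*(F) \leq x^*(E)$ on the positive term $(c_{p+1}-c_p)\,x^*(F)$, obtaining LHS $\leq (c_{p+1}-c_p)\,x^*(E) - (r(F)-c_p)\,x^*(E) = (c_{p+1}-r(F))\,x^*(E)$. When $r(F) > c_{p+1}$, the factor $(c_{p+1}-r(F))$ is negative and the hypothesis $x^*(E) > c_p$ flips under multiplication to give $(c_{p+1}-r(F))\,x^*(E) < (c_{p+1}-r(F))\,c_p = c_p(c_{p+1}-r(F))$. The boundary $r(F)=c_{p+1}$ gives LHS $\leq 0 =$ RHS directly.

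There is no deep obstacle; the proof is just careful bookkeeping of signs, and the only subtlety is remembering that one must use $x^*(E) < c_{p+1}$ in the first case and $x^*(E) > c_p$ in the second, with the corresponding boundary $r(F) \in \{c_p, c_{p+1}\}$ handled separately so that the strict inequality in the hypothesis is preserved as a strict (or sufficient non-strict) opposite inequality on the LHS.
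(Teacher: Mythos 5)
Your proof is correct and follows essentially the same route as the paper's: both rewrite the left-hand side as $(c_{p+1}-c_p)x^*(F)-(r(F)-c_p)x^*(E)$, then in the case $r(F)\leq c_p$ apply the rank inequality $x^*(F)\leq r(F)$ together with $x^*(E)<c_{p+1}$, and in the case $r(F)\geq c_{p+1}$ apply $x^*(F)\leq x^*(E)$ together with $x^*(E)>c_p$. The only cosmetic difference is that the paper factors the difference of the two sides into a product of signed terms, whereas you substitute the extreme value of $x^*(E)$ directly; the computations are identical.
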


\begin{proof}
Let $F \subseteq E$, and assume that $\rank(F) \leq c_p$. Then,
\[
\begin{array}{cl}
&(c_{p+1}-\rank(F)) x^*(F) - (\rank(F)-c_p)x^*(E \setminus F)-
c_p(c_{p+1}-\rank(F))\\
=& (c_{p+1}-c_p) x^*(F) - (\rank(F)-c_p)x^*(E)-
c_p(c_{p+1}-\rank(F))\\
\leq & (c_{p+1}-c_p) \rank(F) - (\rank(F)-c_p)x^*(E)-
c_p(c_{p+1}-\rank(F))\\
= & \underbrace{(c_{p+1}-x^*(E))}_{>
  0}\underbrace{(\rank(F)-c_p)}_{\leq 0}\; \leq \; 0.
\end{array}
\]

Next, if  $\rank(F) \geq c_{p+1}$, then 
\[
\begin{array}{cl}
&(c_{p+1}-\rank(F)) x^*(F) - (\rank(F)-c_p)x^*(E \setminus F)-
c_p(c_{p+1}-\rank(F))\\
=& (c_{p+1}-c_p) x^*(F) - (\rank(F)-c_p)x^*(E)-
c_p(c_{p+1}-\rank(F))\\
\leq & (c_{p+1}-c_p) x^*(E) - (\rank(F)-c_p)x^*(E)-
c_p(c_{p+1}-\rank(F))\\
= & \underbrace{(c_{p+1}-\rank(F))}_{\leq
  0}\underbrace{(x^*(E)-c_p)}_{> 0}\; \leq \; 0.
\end{array}
\]

Thus, $(c_{p+1}-\rank(F)) x^*(F) -
(\rank(F)-c_p)x^*(E \setminus F) > c_p(c_{p+1}-\rank(F))$ at most if
$c_p<\rank(F)<c_{p+1}$.   
\end{proof}

\begin{Theorem} \label{T_SEP}
Given a matroid $M=(E,\mathcal{I})$ by an independence testing oracle, a cardinality sequence $c$, and a vector $x^\star \in \mathbb{R}^E_+$ satisfying all rank
inequalities \eqref{eq_rankInequalities}, 
the separation problem for $x^*$ and the rank induced forbidden set
inequalities \eqref{eq_FS} can be solved in strongly polynomial time.
\end{Theorem}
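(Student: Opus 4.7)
The plan is to reduce the separation problem in one stroke to a single call of Cunningham's algorithm. First I compute $x^*(E)$. By Lemma \ref{L_SEP1}, if no index $p \in \{1,\dots,m-1\}$ satisfies $c_p < x^*(E) < c_{p+1}$ (i.e., $x^*(E)$ coincides with some $c_p$ or lies outside $[c_1,c_m]$), then no rank induced forbidden set inequality can be violated and we are done. Otherwise the relevant $p$ is \emph{uniquely determined} by the value of $x^*(E)$, so we only have to search for a violating set $F$ within this single bracket $(c_p, c_{p+1})$.

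Next I rewrite the violation expression. Using $x^*(E\setminus F)=x^*(E)-x^*(F)$, a short calculation gives
\begin{equation*}
\FS_F(x^*) - c_p(c_{p+1}-r(F)) \;=\; (c_{p+1}-c_p)\,x^*(F)\;-\;(x^*(E)-c_p)\,r(F)\;-\;c_p\bigl(c_{p+1}-x^*(E)\bigr).
\end{equation*}
Set $\mu := c_{p+1}-c_p>0$, $\nu := x^*(E)-c_p>0$, and $\kappa := c_{p+1}-x^*(E)>0$. Then the violation equals $\mu\, x^*(F)-\nu\, r(F)-c_p\kappa$. Since $\mu,\nu>0$ and $c_p\kappa$ is a constant independent of $F$, maximizing this quantity over $F \subseteq E$ is equivalent to maximizing $\tilde x(F)-r(F)$, where $\tilde x := (\mu/\nu)\, x^*$. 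Note $\tilde x \geq 0$ because $x^*\geq 0$ and $\mu/\nu>0$.

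Now I invoke Theorem \ref{Tmm} for the nonnegative vector $\tilde x$: one has $\max_{F \subseteq E}(\tilde x(F)-r(F))=\tilde x(E)-\min_{F \subseteq E}(r(F)+\tilde x(E\setminus F))$, and Cunningham's algorithm \cite{Cunningham} produces a minimizer $F^*$ in strongly polynomial time using only the independence oracle for $M$. I then check whether $\mu\, x^*(F^*)-\nu\, r(F^*)-c_p\kappa>0$. If it is, Lemma \ref{L_SEP2} guarantees that $c_p<r(F^*)<c_{p+1}$, so $F^*$ indexes a legitimate rank induced forbidden set inequality from \eqref{eq_FS}, and this inequality is violated by $x^*$. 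If the optimum value is nonpositive, then no inequality in \eqref{eq_FS} is violated.

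There is essentially no hard step here: Lemmas \ref{L_SEP1} and \ref{L_SEP2} already isolate the relevant bracket, and the algebraic reduction above shows the separation problem is affine in $(x^*(F),r(F))$ with positive coefficient on the $x^*$-term, so that the scaled vector $\tilde x$ feeds directly into Cunningham's routine. The only care needed is verifying the sign conditions $\mu,\nu,\kappa>0$ so that the equivalence with maximizing $\tilde x(F)-r(F)$ is preserved, and then reading off the strongly polynomial bound from Cunningham's algorithm.
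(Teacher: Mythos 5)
Your proposal is correct and follows essentially the same route as the paper: your scaled vector $\tilde x = (\mu/\nu)x^*$ is exactly the paper's $x' = x^*/\delta$ with $\delta = (x^*(E)-c_p)/(c_{p+1}-c_p)$, and you use Lemmas \ref{L_SEP1} and \ref{L_SEP2} to isolate the unique bracket and to certify that the maximizer returned by Cunningham's algorithm indexes a genuine inequality of \eqref{eq_FS}, just as the paper does.
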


\begin{proof}
By Lemmas \ref{L_SEP1} and \ref{L_SEP2} we know that $x^*$ violates a
rank induced forbidden set inequality at most if  $c_p<x^*(E)<c_{p+1}$ for
some $p \in \{1,\dots,m-1\}$. Thus, if $x^*(E)=c_q$ for some $q \in
\{1,\dots,m\}$, then $x^* \in \MAT$.

Suppose that $c_p<x^*(E)<c_{p+1}$ for some $p \in \{1,\dots,m-1\}$.
We would like to find some $F' \subseteq E$ such that 
\[(c_{p+1}-\rank(F')) x^*(F') - (\rank(F')-c_p)x^*(E \setminus F') -
c_p(c_{p+1}-\rank(F')) >0\] if there is any. Lemma \ref{L_SEP2}
says that  $c_p<\rank(F')<c_{p+1}$, and thus, the inequality $\FS_{F'}(x)
\leq c_p(c_{p+1}-\rank(F'))$ is indeed a rank induced forbidden set
inequality among \eqref{eq_FS} violated by $x^*$. If there is
no such $F'$, then for all $F \subseteq E$ with $c_p<\rank(F)<c_{p+1}$
the associated rank induced forbidden set inequality with $F$ is satisfied by
$x^*$, and by Lemma \ref{L_SEP1}, all other rank induced forbidden set
inequalities among \eqref{eq_FS} are also satisfied by $x^*$.

To find such a subset $F'$ of $E$,
set $\delta := \frac{x^*(E)-c_p}{c_{p+1}-c_p}$. Since
$c_p<x^*(E)<c_{p+1}$, $0<\delta<1$. Moreover, $
\frac{c_{p+1}-x^*(E)}{c_{p+1}-c_p}=1-\delta$. For any  $F
\subseteq E$ it now follows:
\[
\begin{array}{crcl}
 & (c_{p+1}-c_p) x^*(F) - (\rank(F)-c_p)x^*(E)-
c_p(c_{p+1}-\rank(F)) & > & 0\\
\Leftrightarrow & x^*(F) - \frac{\rank(F)x^*(E)+c_px^*(E)-
c_p c_{p+1}+c_p\rank(F)}{ c_{p+1}-c_p} & > & 0\\
\Leftrightarrow & x^*(F) - \rank(F)\frac{x^*(E)-c_p}{c_{p+1}-c_p} -
c_p\frac{c_{p+1}-x^*(E)}{ c_{p+1}-c_p} & > & 0\\
\Leftrightarrow & x^*(F) - \rank(F)\delta  & > & c_p(1-\delta)\\
\Leftrightarrow & \frac{x^*(F)}{\delta} - \rank(F)  & > &
c_p\frac{(1-\delta)}{\delta}.\\ 
\end{array}
\]
Setting $x' := \frac{1}{\delta} x^*$, we see that the last inequality
is equivalent to $x'(F) - \rank(F) > c_p\frac{(1-\delta)}{\delta}$. Thus,
we can apply Cunningham's algorithm to find some $F \subseteq E$ that
maximizes $x'(F)-\rank(F)$. If  $x'(F)-\rank(F)>
c_p\frac{(1-\delta)}{\delta}$, then $c_p<\rank(F)<c_{p+1}$ and the
rank induced forbidden set inequality associated with $F$ is violated by $x^*$.
\end{proof}

Consequently, we suggest a separation routine that works as
follows. Assume that the fractional point $x^*$ satisfies the nonnegativity constraints and the cardinality bounds. 
First, compute with Cunningham's algorithm a subset $F$ of
$E$ maximizing $x^*(F)-\rank(F)$. If $x^*(F)-\rank(F)>0$, then the
associated rank inequality  $x(F) \leq \rank(F)$ is violated by
$x^*$. 
If $x^*(F)-\rank(F) \leq 0$, then $x^*$
satisfies all rank inequalities \eqref{eq_rankInequalities}, and if,
in addition, $x^*(E)=c_p$ for some $p$, then we know that $x^* \in
\MAT$. Otherwise, i.e., if $c_p<x^*(E)<c_{p+1}$ for some $p \in
\{1,\dots,m-1\}$, then we check whether or not there is a violated
rank induced forbidden set inequality among \eqref{eq_FS} by applying
Cunningham's algorithm on $M=(E,\mathcal{I})$ and $x' =
\frac{1}{\delta}x^*$.

\begin{Corollary} \label{C:SEP2}
Given a matroid $M=(E,\mathcal{I})$ by an independence testing oracle, a cardinality sequence $c$, and a vector $x^\star \in \mathbb{R}^E_+$,  the separation problem for $x^\star$ and $\MAT$ can be solved in strongly polynomial time.
\hfill $\Box$
\end{Corollary}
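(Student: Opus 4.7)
The plan is to combine the complete description from the main theorem with the separation routine outlined just before the corollary, verifying that each class of inequalities in the description \eqref{eq_FS}--\eqref{eq_nn} admits a strongly polynomial separation procedure.

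First I would dispose of the easy classes. The nonnegativity constraints \eqref{eq_nn} can be checked in $O(|E|)$ time by inspecting the components of $x^\star$, and the cardinality bounds \eqref{eq_lowerBound}--\eqref{eq_upperBound} reduce to computing $x^\star(E)$ and comparing with $c_1$ and $c_m$. So we may assume that $x^\star \in \mathbb{R}^E_+$ and $c_1 \leq x^\star(E) \leq c_m$.

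Next I would invoke Cunningham's algorithm~\cite{Cunningham} to find, in strongly polynomial time using only the independence testing oracle, some $F \subseteq E$ maximizing $x^\star(F) - r(F)$. If this maximum is positive, the rank inequality $x(F) \leq r(F)$ is violated and we return it; otherwise $x^\star$ satisfies all of \eqref{eq_rankInequalities}. At this point, if $x^\star(E) = c_q$ for some $q$, then $x^\star$ lies in the polytope $P_{\mathcal{I}'}^c(E)$ associated with the $c_q$-truncation of $M$ (compare \eqref{=k}), and in particular $x^\star \in \MAT$, so no inequality is violated and we are done.

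The remaining case is $c_p < x^\star(E) < c_{p+1}$ for some $p \in \{1,\dots,m-1\}$. Here I would apply Theorem~\ref{T_SEP}: set $\delta := (x^\star(E) - c_p)/(c_{p+1}-c_p) \in (0,1)$, form $x' := (1/\delta)x^\star$, and run Cunningham's algorithm once more to maximize $x'(F) - r(F)$. By the equivalences derived in the proof of Theorem~\ref{T_SEP}, the maximizer $F'$ yields a violated rank induced forbidden set inequality exactly when $x'(F') - r(F') > c_p (1-\delta)/\delta$, and Lemma~\ref{L_SEP2} guarantees that such an $F'$ automatically satisfies $c_p < r(F') < c_{p+1}$, so the returned inequality is indeed among \eqref{eq_FS}. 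If no violated inequality is found, Lemmas~\ref{L_SEP1} and \ref{L_SEP2} together imply that $x^\star$ satisfies all of \eqref{eq_FS}, and by Theorem~2.1 we conclude $x^\star \in \MAT$. Since each step consists of a constant number of calls to Cunningham's strongly polynomial routine plus linear bookkeeping, the total running time is strongly polynomial. The only mildly delicate point is to observe that the two Cunningham invocations, together with Theorem~2.1, truly cover all facet-classes of $\MAT$; once the work leading to Theorem~\ref{T_SEP} is in hand, this is immediate.
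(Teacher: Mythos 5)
Your proposal is correct and follows essentially the same route as the paper: the paper's (implicit) proof of the corollary is exactly the separation routine described in the paragraph preceding it — check the trivial inequalities, one call to Cunningham's algorithm for the rank inequalities, then the $\delta$-scaling reduction of Theorem~\ref{T_SEP} with a second Cunningham call for the rank induced forbidden set inequalities. Your justification of the case $x^\star(E)=c_q$ via the base polytope description \eqref{=k} is a harmless minor variant of the paper's appeal to Lemma~\ref{L_SEP1}.
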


\section{Concluding remarks}
\label{Sec:remarks}

The cardinality constrained matroid polytope turns out to be a useful object to enhance the theory of polyhedra
associated with cardinality constrained combinatorial optimization problems. Imposing cardinality constraints on a combinatorial optimization problem does not necessarily turn it into a harder problem: The cardinality constrained version of the maximum weight independent set problem in a matroid is manageable on the algorithmic as well as on the polyhedral side without any difficulties. Facets related to cardinality restrictions (rank induced forbidden set inequalities) are linked to well known notions of matroid theory (closed subsets of $E$). The analysis of the separation problem for the rank induced forbidden set inequalities discloses that it is sometimes better not to split a cardinality constrained problem into ``simpler'' cardinality constrained problems but to transform it into one or more non-cardinality restricted problems.

It stands to reason to investigate the intersection of two matroids
with regard to cardinality restrictions. As it is well known, if an
independence system $\mathcal{I}$ defined on some ground set $E$ can be
described as the intersection of two matroids $M_1=(E,\mathcal{I}_1)$
and $M_2=(E,\mathcal{I}_2)$, then the optimization problem $\max w(I),
\, I \in \mathcal{I}$ can be solved in polynomial time, for instance
with Lawler's weighted matroid intersection algorithm~\cite{Lawler}.
This algorithm
solves also the cardinality constrained version $\max w(I), \, I \in
\mathcal{I} \cap \CHS^c(E)$, since for each cardinality $p \leq
\rank(E)$ it generates an independent set $I$ of cardinality $p$
which is optimal among all independent sets $J$ of cardinality
$p$. Thus, from an algorithmic point of view the problem is well
studied. However, there is an open question regarding the associated
polytope. As it is well known, $P_{\mathcal{I}}(E) 
=P_{\mathcal{I}_1}(E) \cap P_{\mathcal{I}_2}(E)$, that is, the
non-cardinality constrained independent set polytope 
$P_{\mathcal{I}}(E)$ is determined by the nonnegativity constraints
$x_e \geq 0$, $e \in E$, and the rank inequalities $x(F) \leq r_j(F)$,
$\emptyset \neq F \subseteq E$, $j=1,2$, where $r_j$ is the rank function with
respect to $\mathcal{I}_j$. We do not know, however, whether
or not $P_{\mathcal{I}}^c(E)=P_{\mathcal{I}_1}^c(E) \cap
P_{\mathcal{I}_2}^c(E)$ holds. So far, we have not found any
counterexample contradicting the hypothesis that equality holds.


\begin{thebibliography}{10}

\bibitem{CM82}
P.~Camion and J.~F. Maurras.
\newblock {\em {Polytopes \`a sommets dans l'ensemble $\{0,1\}^n$.}}
\newblock M\'elanges, hommage \'a P. Gillis, Cahiers du Centre d'\'Etudes de
  Recherche Op\'erationnelle, Bruxelles, vol 24 (1982), 107--120.

\bibitem{Cunningham}
W.~H. Cunningham, {\em {Testing membership in matroid polyhedra.}}
\newblock J. Comb. Theory, Ser. B 36 {(1984)}, 161-188.

\bibitem{Edmonds1970}
J.~Edmonds.
\newblock {\em {Submodular functions, matroids, and certain polyhedra.}}
\newblock {In: R. Guy et al. (eds.), {\em Combinatorial structures and their
  applications}, (Proc. Calgary internat. Conf. combinat. Struct. Appl.,
  Calgary 1969), Gordon and Breach, New York, 1970, pp. 69-87.}

\bibitem{Edmonds1971}
J.~Edmonds, {\em {Matroids and the greedy algorithm.}}
\newblock Math. Program. 1 {(1971)}, 127-136.

\bibitem{FS}
U.~Feige and M.~Seltser.
\newblock {\em On the densest k-subgraph problem}.
\newblock Technical report, Department of Applied Mathematics and Computer
  Science, The Weizmann Institute, Rehobot, 1997.

\bibitem{GP}
A.~B. Gamble and W.~R. Pulleyblank, {\em {Forest covers and a polyhedral
  intersection theorem.}}
\newblock Math. Program., Ser. B 45 {(1989)}, 49-58.

\bibitem{Groetschel}
M.~Gr\"otschel.
\newblock {\em {Cardinality homogeneous set systems, cycles in matroids, and
  associated polytopes.}}
\newblock {In: Gr\"otschel, Martin (ed.), {\em The sharpest cut. The impact of
  Manfred Padberg and his work.} MPS/ SIAM Series on Optimization, 2004, pp.
  99-120.}

\bibitem{GLS}
M.~Gr\"otschel, L.~Lov\'asz, and A.~Schrijver.
\newblock {\em {Geometric algorithms and combinatorial optimization.}}
\newblock Springer, Berlin, 1988.

\bibitem{KS}
V.~Kaibel and R.~Stephan.
\newblock {\em On cardinality constrained cycle and path polytopes}.
\newblock Technical report, Zuse Institute Berlin, 2007.
\newblock Available at http://opus.kobv.de/zib/volltexte/2007/1024/.

\bibitem{Lawler}
E.~L. Lawler, {\em {Matroid intersection algorithms.}}
\newblock Math. Program. 9 {(1975)}, 31-56.

\bibitem{Maurras77}
J.~F. Maurras.
\newblock {\em {An example of dual polytopes in the unit hypercube.}}
\newblock {Stud. integer Program., Proc. Workshop Bonn 1975, Ann. Discrete
  Math. 1, 391-392 (1977).}

\end{thebibliography}
\end{document}